\documentclass[11pt,leqno]{amsart}
\topmargin -1.2cm \evensidemargin 0cm \oddsidemargin 0cm
\textwidth 16cm \textheight23cm
\usepackage{amscd}
\usepackage{color}
\usepackage[all,pdf]{xy}
\usepackage{amssymb}
\usepackage{amsfonts}
\usepackage{latexsym}
\usepackage{verbatim}
\usepackage{amsthm}
\theoremstyle{plain}

\newtheorem*{theorem*}{Theorem A}
\newtheorem*{theorem**}{Theorem B}

\theoremstyle{plain}
\newtheorem{theorem}{Theorem}[section]
\newtheorem{corollary}[theorem]{Corollary}

\newtheorem{lemma}[theorem]{Lemma}
\theoremstyle{definition}

\newtheorem{oss}[theorem]{Remark}

\newcommand\R{{\mathbb R}}

\renewcommand{\bar}[1]{\overline{#1}}

\sloppy

\textheight=8in
\textwidth=6in
\oddsidemargin=0.25in
\evensidemargin=0.25in

\title[Solutions to the Strominger system with torus symmetrry ]{Solutions to the Hull-Strominger system with torus symmetry}

\begin{document}

\author{Anna Fino, Gueo Grantcharov  and Luigi Vezzoni}
\date{\today}
\subjclass[2000]{Primary 32J81; Secondary  53C07.}
\keywords{Strominger's system, K3 orbifold, stable bundle}
\address{Dipartimento di Matematica \lq\lq Giuseppe Peano\rq\rq \\ Universit\`a di Torino\\
Via Carlo Alberto 10\\
10123 Torino\\ Italy}
 \email{annamaria.fino@unito.it, luigi.vezzoni@unito.it}
 \address{ Department of Mathematics and Statistics Florida International University\\
  Miami Florida, 33199, USA}
\email{grantchg@fiu.edu}

\thanks{The work of the first and third authors was supported by the project FIRB ``Geometria differenziale e teoria geometrica delle funzioni'',
 the project PRIN 2017  \lq \lq Real and Complex Manifolds: Topology, Geometry and Holomorphic Dynamics"
 and by G.N.S.A.G.A. of I.N.d.A.M. The work of the second author was supported by the Simons Foundation grant \#246184}

\maketitle

\begin{abstract} We construct new   smooth solutions to the Hull-Strominger system, showing that the  Fu-Yau solution on  torus bundles over K3 surfaces can be generalized to torus bundles over  K3 orbifolds. In particular, we prove  that, for $13 \leq k \leq 22$ and $14\leq r\leq 22$,  the smooth manifolds  $S^1\times \sharp_k(S^2\times S^3)$ and $\sharp_r  (S^2 \times S^4) \sharp_{r+1} (S^3 \times S^3)$,  have  a complex structure with trivial canonical bundle and admit a solution to the Hull-Strominger  system.

\end{abstract}

\section{Introduction}

The initial proposal for a superstring compactification \cite{CHSW} considered  a 10-dimensional space-time as a metric product of a 4-dimensional maximally supersymmetric space time $N$ and a 6-dimensional compact K\"ahler Calabi-Yau internal manifold $M$. Around the same time A. Strominger \cite{Strominger} and C. Hull \cite{Hull86} considered a heterotic superstring background where $N$  has a wrapping factor. Then the internal space $M$ is still complex with trivial canonical bundle, but   no longer K\"ahler. The supersymmetry conditions lead to what is known as  the  Hull-Strominger system.
To describe it, let $M$ be a compact complex manifold of complex dimension 3 with holomorphically trivial canonical bundle, so that it admits a nowhere vanishing
holomorphic (3,0)-form $\psi$.  Let $V$ be a  smooth   complex vector bundle over $M$ with a Hermitian metric  $H$ along its fibers and let $\alpha' \in \R$ be a constant, also called  the slope parameter.
The  Hull-Strominger system,   for the fundamental form $\omega$  of a Hermitian metric  $g$ on $M$ and a unitary connection $\nabla^H$ on $(V, H)$,  is given by:
\begin{eqnarray}
&& \label{SS1} F_H \wedge \omega^2=0;\\
&& \label{SS2} F_H^{2,0}=F_H^{0,2}=0;\\
&& \label{SS3}i\partial \bar\partial \omega=\frac{\alpha'}{4}\,{\rm tr}\left(R_{\nabla}\wedge R_{\nabla}-F_H \wedge  F_H\right); \\
&& \label{SS4}d(\|\psi\|_\omega\,\omega^2)=0,
\end{eqnarray}
where  $F_H$ and $R_{\nabla}$  are respectively  the curvatures of   $\nabla^H$  and of a metric connection $\nabla$  on  $TM$. Using the equation (\ref{SS2}) one can endow $V$ with a holomorphic structure such that $\nabla^H$ is the Chern connection for $H$. Then (\ref{SS1}) and (\ref{SS2}) describe the Hermitian-Yang-Mills equations for $\nabla^H$. 

The last equation says that $\omega$ is conformally balanced. It was originally written as
$$
\delta \omega=i(\bar \partial-\partial)\ln \|\psi\|_{\omega},
$$
where  $\delta$ is the co-differential $ *^{-1}  d \, *$,  and Li and Yau proved in \cite{LY2005} that it could restated as in \eqref{SS4}.
In the equation $\eqref{SS3}$, known as the Bianchi identity or anomaly cancellation equation, there is an ambiguity in the choice of a metric connection $\nabla$ on $TM$, due to its origins in heterotic string theory  \cite{Hull86, Strominger}. Also from  physical perspective one has $\alpha' \geq 0$ with $\alpha'=0$ corresponding to the K\"ahler case, but in mathematical literature the case $\alpha'<0$ is also considered \cite{PPZ17unim}. Different choices of $\nabla$ and their physical meaning are discussed in \cite{D-S}.
In the present paper  we will  consider the case that $\nabla$ is the Chern connection of $\omega$ and we will denote its curvature  by $R$.

The first solutions of the Hull-Strominger  system on compact non-K\"ahler manifolds, taking $\nabla$  as the Chern connection of $\omega$, were found in  the seminal  work by Fu and Yau \cite{FuYau2007,FuYau}.  The solutions are defined on toric bundles over K3 surfaces.
In  \cite{CE}  Calabi and Eckmann constructed a complex structure on a principal toric bundle over the product $\mathbb{CP}^n \times \mathbb{CP}^m$. The Calabi-Eckmann construction can be easily generalized to any complex base manifold (see e.g. \cite{AGG}) and  Goldstein and Prokushkin showed in \cite{GP} that for a Ricci-flat base and an appropriate choice of the principal torus bundle, the total space has trivial canonical bundle and admits a balanced metric. Starting from the result of Goldstein and Prokushkin, Fu and Yau showed that the Hull-Strominger system on some principal torus fibrations on K3 manifolds can be reduced to a complex Monge-Amp\`ere type equation for a scalar function on the base, and solved it by means of hard analytical techniques (see also  \cite{PPZ16, PPZ18}).

Since then, and the work by Li and Yau  \cite{LY2005}, the successive studies of different analytical and geometrical aspects of the Hull-Strominger system have had an important influence to  non-K\"ahler  complex geometry (see  for instance  \cite{Fei18,Fernandez16, PPZ18}).
Up to now the biggest pool of solutions is provided by the choice  of $\nabla$ given by the Chern connection  \cite{C1, C2, FHP, FHP17,FY14, FIUV, FTY09, OUV17, PPZ17, PPZ16, PPZ17unim,PPZ18}, which includes the first solutions found by Fu, Li, Tseng, and Yau.  More recently, new examples of solutions of the Hull-Strominger system on non-K\"ahler torus bundles over K3 surfaces originally considered by Fu and Yau, with the property that the connection $\nabla$  is Hermitian-Yang-Mills have been constructed in \cite{Fernandez18}. For the physical aspects and significance of various choices for $\nabla$ see \cite{D-S}.

 The theorem of Fu and Yau  in  \cite{FuYau}  states that,   given a compact K3 surface $(S,\omega_S)$ equipped with two anti-self-dual $(1,1)$-forms $\omega_1$ and $\omega_2$  such that $[\omega_1], [\omega_2]\in H^2(S,\mathbb{Z})$
 and with a stable holomorphic vector bundle $E$ of degree $0$ over $(S,\omega_S)$ satisfying
$$
\alpha'(24-(c_2(E)-\frac 1 2 c_1^2(E)))=\frac{1}{4\pi^2}\int_S(\|
\omega_1\|^2+\|\omega_2\|^2)\frac {\omega_S^2}{2},
$$
then there exist a smooth Hermitian manifold $(M,\omega_u = \pi^* (e^u \omega_S) + \frac{i}{2} \theta \wedge \overline \theta)$ and a metric $h$ along the fibers of  $E$ such that $M$ is a principal torus bundle over $S$ and $(V=\pi^*E,H=\pi^*(h), M, \omega_u)$ solves the Hull-Strominger system.  The construction of the torus bundle is  due to Goldstein and  Prokushkin  \cite{GP} and the proof  of Fu and Yau amounts essentially to finding $u$ such that the  condition \eqref{SS3}  holds. The ansatz reduces the anomaly cancelation condition to a scalar equation on $S$ and the formula above is its integrability condition where 24 is the Euler number of $S$. The key point is that this equation can be studied on  the K3 surface $S$ and reduces to a complex Monge-Amp\`ere type equation, which can be solved using a continuity method type argument inspired from the techniques of Yau in \cite{Yau}.  We mention here that \cite{FuYau} provides the only known so far simply-connected compact non-K\"ahler 6-manifold admitting a solution of the Hull-Strominger system.

Our main result claims that the theorem of Fu and Yau generalizes to K3 orbifolds,  extending the result to Hermitian $3$-folds foliated by non-singular elliptic curves. In this way we obtain new simply-connected compact examples carrying solutions of the Hull-Strominger system. A construction of non-K\"ahler Calabi-Yau spaces using an orbifold  base was suggested in \cite[Section 6.3]{GGP}.

\begin{theorem*}\label{main1}
Let $X$ be a compact K3 orbifold  with a Ricci-flat K\"ahler form $\omega_X$  and orbifold Euler number $e(X)$. Let
$\omega_1$ and $\omega_2$ be anti-self-dual $(1,1)$-forms on $X$
such that $[\omega_1], [\omega_2]\in H_{orb}^2(X,\mathbb{Z})$ and the total space $M$ of the principal $T^2$  orbifold bundle
$\pi: M \rightarrow X$ determined by them is smooth. Let $E$ be a stable vector bundle of degree $0$ over $(X,\omega_X)$ such that
\begin{equation}\label{onE}
\alpha'(e(X)-(c_2(E)-\frac 1 2 c_1^2(E)))=\frac{1}{4\pi^2}\int_X(\|
\omega_1\|^2+\|\omega_2\|^2)\frac {\omega_X^2}{2}.
\end{equation}
Then $M$ has a  Hermitian structure $(M,\omega_u)$ and there is a  metric $h$ along the fibers of  $E$ such that  $(V=\pi^*E,H=\pi^*(h), M, \omega_u)$ solves the  Hull-Strominger
system.
\end{theorem*}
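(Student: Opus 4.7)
The strategy is to mimic the Fu--Yau argument in the orbifold category, upgrading at each step the smooth constructions of \cite{FuYau} to their orbifold counterparts. First I would extend the Goldstein--Prokushkin construction: the integral anti-self-dual classes $[\omega_1], [\omega_2] \in H_{orb}^2(X,\mathbb{Z})$ determine a principal $T^2$ orbifold bundle $\pi: M \to X$ whose total space is, by hypothesis, smooth. Using a connection $1$-form $\theta$ on $M$ with curvature $\omega_1 + i\omega_2$, one defines a complex structure on $M$ for which $\theta$ is of type $(1,0)$. The orbifold version of Yau's theorem provides a Ricci-flat K\"ahler metric on $X$ and a nowhere-vanishing holomorphic $(2,0)$-form $\Omega_X$, and then $\psi := \pi^*\Omega_X \wedge \theta$ trivializes the canonical bundle of $M$. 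One then adopts the Fu--Yau ansatz $\omega_u = \pi^*(e^u \omega_X) + \frac{i}{2}\theta \wedge \bar\theta$ for an unknown smooth function $u$ on $X$.

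The remaining equations of the system are addressed in turn. For \eqref{SS1}--\eqref{SS2} take $H = \pi^* h$, where $h$ is a Hermitian--Einstein metric on the stable bundle $E \to (X,\omega_X)$ furnished by the Donaldson--Uhlenbeck--Yau theorem in the orbifold setting. The conformally balanced equation \eqref{SS4} is automatic from the ansatz, exactly as in the smooth case. This reduces the entire system to the anomaly cancellation \eqref{SS3}, and the Fu--Yau computation goes through essentially verbatim to give a scalar Monge--Amp\`ere-type PDE for $u$ on $X$. Integrating the equation over $X$ against $\omega_X^2$ and invoking the orbifold Chern--Gauss--Bonnet theorem — which is what replaces the $24 = e(K3)$ appearing in \cite{FuYau} by the orbifold Euler number $e(X)$ — yields precisely the compatibility condition \eqref{onE}.

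The main obstacle is the analytic step of solving this PDE on the orbifold $X$. I would work in local uniformizing charts around each singular point, where the equation lifts to an equivariant Monge--Amp\`ere-type equation with smooth coefficients. The Fu--Yau $C^0$, $C^2$ and $C^{2,\alpha}$ estimates then apply chart by chart to equivariant solutions, and since finitely many such charts together with the smooth part of $X$ cover the orbifold, one obtains global estimates in suitable orbifold H\"older spaces. With these in hand, the continuity method of \cite{FuYau} runs unchanged: openness follows from linearization, closedness from the a priori estimates, and the starting point is provided by the normalization built into \eqref{onE}. This delivers the required $u$ and produces the sought solution of the Hull--Strominger system on $M$.
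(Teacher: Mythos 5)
Your reduction steps match the paper's: the orbifold Goldstein--Prokushkin construction, the pulled-back Hermitian--Yang--Mills metric from the orbifold Donaldson--Uhlenbeck--Yau theorem, the conformally balanced property of the ansatz $\omega_u$, and the integrability condition \eqref{onE} obtained by integrating the anomaly equation and applying orbifold Chern--Weil theory (this is exactly Theorem \ref{toric} plus the computation at the end of the proof of Theorem A). The divergence --- and the genuine gap --- is in the analytic core. You propose to solve the scalar equation downstairs on the orbifold $X$ by applying the Fu--Yau $C^0$, $C^2$, $C^{2,\alpha}$ estimates ``chart by chart to equivariant solutions'' in uniformizing charts and then patching. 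That mechanism fails as stated: the $C^0$ estimate in \cite{FuYau} comes from Moser iteration on global integral inequalities tied to the normalization $\int e^u\,\omega^2 = A$, and the higher-order estimates in \cite{FuYau,picard} likewise rest on integration by parts over the whole compact manifold and on global Sobolev inequalities. None of these localize: on a single uniformizing chart the lifted equivariant problem has no boundary control, so there is no chart-by-chart a priori estimate to patch. To salvage your route you would have to re-derive the entire global Fu--Yau/Phong--Picard--Zhang estimate scheme in the orbifold category (orbifold Sobolev embedding, global integral identities, maximum principle), which is plausible but is an unverified rewriting of a long and delicate argument, not a consequence of local elliptic theory.

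The paper's proof avoids orbifold analysis altogether, and this is the key idea missing from your proposal: it transfers the equation to the smooth total space $M$, where the unknown $u$ becomes a basic function for the foliation by elliptic curves and the equation becomes transversally elliptic (Theorem \ref{main}, whose hypotheses are verified via Theorem \ref{toric}). On $M$, which is an honest compact manifold, the estimates of \cite{FuYau,picard} apply verbatim; the only new difficulty is keeping $u$ basic along the continuity method. Closedness of the basic continuity set ${\bf T}_B$ is immediate since $C^{5,\beta}_B(M)$ is closed in $C^{5,\beta}(M)$, and openness is where the genuinely new ingredient enters: El Kacimi's theorem \cite{EKA} on transversally elliptic operators gives the $L^2$-orthogonal decomposition $C_B^{3,\beta}(M)=\ker(F^*)\oplus{\rm Im}(F)$, which, combined with the bijectivity of the full linearization $\tilde L$ known from \cite{FuYau,picard} and the fact that the relevant Chern--Laplacian preserves basic functions, yields surjectivity of the restricted linearization $L$. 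So your outline is sound up to the point of producing the PDE and its integrability condition, but the analytic step you dispatch in one sentence is precisely where the paper had to introduce a new argument, and the localization device you offer in its place would not work.
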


The proof of Theorem A  is based on Theorem \ref{toric} in Section 2 and on Theorem \ref{main} in Section 3.  Theorem \ref{toric}
implies the existence of a complex structure on $M$ carrying a balanced metric and a transverse Calabi-Yau structure, while Theorem \ref{main} states the existence of a solution to the Hull-Strominger system on some complex $3$-folds equipped with a Calabi foliation.

Finally we would mention that the topology of compact simply-connected 6-dimensional manifolds have been well studied in the 60's and 70's. In particular there is a  topological classification for compact simply-connected 6-manifolds with  a free $S^1$-action \cite{GL}, which leads to few simple explicit examples. This could be compared to the millions of examples of K\"ahler Calabi-Yau spaces, many of which also carry an elliptic fibrations. Using the classification in \cite{GL} we obtain:

\begin{theorem**}\label{maincor} Let  $13\leq k\leq 22$ and $14\leq r \leq 22$. Then on the smooth manifolds   $S^1\times \sharp_k(S^2\times S^3)$ and $\sharp_r (S^2\times S^4)\sharp_{r+1}(S^3\times S^3)$ there are  complex structures with trivial canonical bundle admitting a balanced metric and a solution to the Hull-Strominger system via the Fu-Yau ansatz.
\end{theorem**}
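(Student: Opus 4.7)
The plan is to apply Theorem A to a carefully selected family of compact K3 orbifolds $X$ equipped with $T^2$-orbifold principal bundles $\pi:M\to X$ whose total spaces realize the listed diffeomorphism types, and then to match the integrability condition \eqref{onE} by a suitable choice of stable bundle $E$ over each $X$. Theorem A already provides the complex structure with trivial canonical bundle, the balanced metric, and the Hull-Strominger solution, so the remaining work is purely topological: exhibit enough examples and identify them.

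For the first family I would produce $M$ as a product $M\cong S^1\times N$ by taking $[\omega_1]=0$ (so one $S^1$ factor is trivial) and choosing $[\omega_2]\in H^2_{orb}(X,\mathbb{Z})$ anti-self-dual, with its local isotropy characters generating the orbifold isotropy at each singular point; this forces $N\to X$ to have smooth simply-connected total space. The Smale-Barden classification of compact simply-connected spin $5$-manifolds with torsion-free $H_2$ identifies $N\cong\sharp_k(S^2\times S^3)$ with $k=b_2(N)$, and $k$ is controlled by $b_2(X)$ and the rank of the transverse part of the Euler class $[\omega_2]$. For each $13\leq k\leq 22$, I would exhibit one K3 orbifold and one admissible Euler class yielding that value of $k$ (after checking that $N$ is spin, which amounts to a parity condition on $[\omega_2]$), then pick a stable degree-zero bundle $E$ so that \eqref{onE} holds, and invoke Theorem A.

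For the second family the construction is analogous but with both Euler classes nontrivial, so that the principal $T^2$-action on $M$ restricts to a free action of any $S^1\subset T^2$. The classification in \cite{GL} of compact simply-connected $6$-manifolds admitting a free $S^1$-action then forces $M\cong\sharp_r(S^2\times S^4)\sharp_{r+1}(S^3\times S^3)$, with $r$ read off from $b_2(M)$ and hence computed from $b_2(X)$ together with the joint rank of $[\omega_1]$ and $[\omega_2]$. Simple-connectedness of $M$ is arranged by choosing the two Euler classes so that the associated map $H_2(X,\mathbb{Z})\to\mathbb{Z}^2$ is surjective. Varying the singular structure of $X$ and the two classes realizes each $14\leq r\leq 22$, and Theorem A again closes the argument.

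The main obstacle will be the simultaneous constraints at the orbifold points: the classes $[\omega_1]$ and $[\omega_2]$ must (i) lie in $H^2_{orb}(X,\mathbb{Z})$ and be anti-self-dual with respect to an orbifold K\"ahler form $\omega_X$, (ii) have local characters that smooth out the total space of the $T^2$-bundle, (iii) produce a simply-connected $M$ with the correct $b_2$ in the $T^2$-case (and a spin $N$ with the correct $b_2$ in the product case), and (iv) leave enough freedom in the Chern numbers of $E$ that the Diophantine condition \eqref{onE} is solvable. The precise endpoints $k\in[13,22]$ and $r\in[14,22]$ should emerge from an explicit census of K3 orbifolds together with the admissible Picard-type lattices of integral anti-self-dual classes on them.
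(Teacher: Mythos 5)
Your outline matches the paper's architecture---first family as $S^1\times N$ with $N$ a smooth Seifert $S^1$-bundle over a K3 orbifold identified via Barden/Koll\'ar, second family identified via the Goldstein--Lininger classification \cite{GL}---but it has a genuine gap exactly where the content of Theorem B lies: you never produce the orbifolds, the integral anti-self-dual classes, or the smoothness and simple-connectedness verifications, deferring all of it to an ``explicit census of K3 orbifolds'' that you do not carry out. In particular, nothing in your argument explains why the ranges are precisely $13\leq k\leq 22$ and $14\leq r\leq 22$; these endpoints are not generic lattice-theoretic facts but the output of one concrete mechanism. The paper fixes a single explicit K3 orbifold (no.~14 on p.~143 of Iano-Fletcher's list \cite{IF}: a complete intersection of two sextics in $\mathbb{P}(2,2,2,3,3)$ with nine isolated $A_1$ points, $b_2=13$, trivial orbifold fundamental group by \cite[Thm.~4.7.12]{BG}) and interpolates by \emph{partial crepant resolution}: blowing up $9-k$ of the $A_1$ points preserves $K_X=0$ and $\pi_1^{orb}=1$ and raises $b_2$ one unit at a time, which is what sweeps out $13\leq k\leq 22$; in the $T^2$ case the need for two independent divisors $D_1=H-mE_1-E_2$, $D_2=H-E_1-mE_2$ (Corollary \ref{cormain}) forces at least two blow-ups, hence $r\geq 14$. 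Your alternative of ``varying the singular structure of $X$'' across a census could in principle work, but without exhibiting the examples it proves nothing.

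The simultaneous constraints you correctly list in (i)--(iii) are resolved in the paper by a concrete device you are missing: the exceptional $(-2)$-curves $E_i$ of the blown-up $A_1$ points satisfy $H.E_i=0$, so classes such as $H+mE_1$ can be made orthogonal to a positive class $\omega=nH-E_1-\cdots-E_k$ (positivity via Nakai--Moishezon), i.e.\ anti-self-dual, with smoothness of the associated Seifert bundle inherited from the known smooth link $Y\to X$ defined by $H$ --- the local divisibility conditions at the surviving singular points are those already satisfied by $H$, and the exceptional curves avoid the singular set (Theorem \ref{cororb}). Your claim that simple-connectedness of the $T^2$-total space is ``arranged'' by surjectivity of $H_2(X,\mathbb{Z})\to\mathbb{Z}^2$ is unjustified in the orbifold Seifert setting; the paper instead gets $\tilde M_1$ simply-connected from Corollary 10 of \cite{Kollar2007}, passes to a primitive pulled-back class for the second circle, and needs \cite{Kollar2005,Kollar2007,cuadros} for torsion-freeness and the Betti number count $k=\mathrm{rk}\,H^2(X,\mathbb{Q})-2$. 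Finally, condition \eqref{onE} is not the delicate Diophantine constraint you anticipate: with $e(X)=24-(\text{number of }A_1\text{ points})$ and the right-hand side positive, one simply solves for $\alpha'>0$ after fixing a stable degree-zero bundle $E$, so item (iv) of your list of obstacles essentially evaporates.
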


 The cases $k=22$ and $r=22$ respectively correspond to the  solutions of Fu and Yau.  The examples in Theorem B    have  the structure of a principal $S^1$-bundle over  Seifert $S^1$-bundles and for them $\alpha '>0$.
 The   simply-connected examples are obtained  starting from a Calabi-Yau orbifold surface (K3 orbifold) with isolated $A_1$ singular points and trivial orbifold fundamental group. Then the construction uses partial resolution of singularities.

 We describe shortly the structure of the paper. In Section 2 we collect the necessary information on orbifolds. We focus on the case in which the singular points are isolated and the local holonomy groups are cyclic, although many results could be generalized for other groups. We collect the sufficient conditions we need for smoothness of the appropriate Seifert $S^1$-bundles over simple orbifolds in Theorem \ref{cororb}. The geometric properties like existence of connections with prescribed curvature on toric bundles is in Theorem \ref{toric}. We consider $T^2$-bundles as sequence of $S^1$-bundles and prove the main topological facts in Corollary \ref{cormain}. In Section 3 we show that the Fu-Yau proof on the existence of a solution to the Hull-Strominger system on principal $T^2$-fibrations over K3 surfaces generalizes to some compact foliated Hermitian $3$-folds equipped with a  transverse Calabi-Yau structure. This section is the analytic core of the paper and makes use of the continuity method taking into account some hard a priori estimates in \cite{FuYau,picard} and a theorem of El Kacimi in \cite{EKA}. Finally combining the results of the previous sections, in Section 4 we prove Theorem B.

\section{ $T^2$-bundles over orbifolds} \label{sect2}

Recall that an orbifold is  a space covered by  charts which are homeomorphic maps into some quotients of open sets of an Euclidean space modulo finite groups. The singular points are the points in which the isotropy subgroup is non-trivial. Classical examples of orbifolds are the leaf spaces of Riemannian foliations with compact leaves (see e.g. \cite{molino}).

In this paper we will consider   complex  orbifolds, where the chart transitions are holomorphic (see the definitions in \cite[Ch. 4]{BG}) and  restrict ourselves to the case in which the singular points are isolated and the local holonomy groups are cyclic. In particular,  we are interested in  special foliations on complex manifolds in which the leaves are elliptic curves and have a structure of a principal bundle over the leaf space, which we  will call  {\em $T^2$-bundles} and the ways to construct them from an orbifold leaf space which is  a complex surface.

We will need the standard notions of vector bundles (or V-bundles),  tensors and sheaves, as well as the basic topological invariants transferred to the orbifold case.
As in the smooth case every  complex orbifold  has a Hermitian metric  and  every principal bundle over an orbifold admits a connection (for a proof see for instance  \cite[Theorem 3.16]{LTX}). The Chern-Weil theory proceeds on orbifolds as well, so  the Chern classes of a holomorphic orbifold bundle  are defined in terms of Hermitian (orbifold) metrics and their Chern curvature. If  we  identify divisors and their Poincar\'e dual 2-forms as in the smooth case, then an ample divisor on a complex orbifold  is represented by a K\"ahler form. We need also the fact that every pair of forms in the characteristic class of a $T^2$-bundle is a curvature of some pair of connection 1-forms. We note here for an orbifold $X$, the orbifold rational cohomology satisfy $H^k_{orb}(X, \mathbb{Q}) = H^k(X, \mathbb{Q})$,  so $H^k_{orb}(X, \mathbb{Z}) \subset H^k(X, \mathbb{Q})$ - a fact we are going to use later.
From  \cite[Theorem 4.3.16]{BG} we get the following standard fact which we formulate explicitly:

\begin{theorem}\label{conectionforms}
For every $T^2$-bundle $\pi: M \rightarrow X$  over an orbifold $X$ with a characteristic class $\alpha,\beta \in  H^2_{orb}(X, \mathbb{Z})$ and any $(\omega_1,\omega_2)\in (\alpha,\beta)$   there  exist  connection 1-forms $\theta_1,\theta_2$ on $M$  such that  $d\theta_i = \pi^*(\omega_i)$.

\end{theorem}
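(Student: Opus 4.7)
The plan is to reduce to the case of a principal $S^1$-orbibundle and then, on each factor, to correct an arbitrary starting connection by the pullback of a $1$-form on $X$. Concretely, since the structure group $T^{2}$ is abelian, I can present $\pi \colon M \to X$ as a tower $M \to M_{1} \to X$ of two $S^{1}$-orbibundles with characteristic classes $\alpha$ and $\beta$ respectively (this is what the authors have in mind in the passage ``We consider $T^{2}$-bundles as sequence of $S^{1}$-bundles''). It therefore suffices to prove the following: if $\pi\colon P \to X$ is a principal $S^{1}$-orbibundle with first Chern class $\alpha \in H^{2}_{orb}(X,\mathbb{Z})$ and $\omega$ is any smooth orbifold $2$-form representing $\alpha$, then there exists a connection $1$-form $\theta$ on $P$ with $d\theta = \pi^{\ast}\omega$.

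For the $S^{1}$-case, my first step is to invoke the existence of connections on principal orbibundles, which is already cited in the paper via \cite[Theorem 3.16]{LTX}: choose any connection $1$-form $\theta^{0}$ on $P$. Its curvature satisfies $d\theta^{0} = \pi^{\ast}\omega^{0}$ for some closed orbifold $2$-form $\omega^{0}$ on $X$, and Chern--Weil theory for orbibundles (as developed in \cite[Ch.~4]{BG}) identifies $[\omega^{0}]$ with the characteristic class $\alpha$ in $H^{2}_{orb}(X,\mathbb{R})$. Since $\omega$ also represents $\alpha$ and orbifold de Rham cohomology computes $H^{\ast}_{orb}(X,\mathbb{R})$, the difference $\omega - \omega^{0}$ is orbifold-exact: there is a smooth orbifold $1$-form $\eta$ on $X$ with $d\eta = \omega - \omega^{0}$.

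The second step is to correct $\theta^{0}$ by a horizontal term. Set
\[
\theta \;=\; \theta^{0} + \pi^{\ast}\eta .
\]
Because $\pi^{\ast}\eta$ vanishes on vertical vectors and is $S^{1}$-invariant, $\theta$ is again a connection $1$-form (its restriction to each fibre is still the Maurer--Cartan form of $S^{1}$, and it is invariant under the principal action). Its curvature is
\[
d\theta \;=\; d\theta^{0} + \pi^{\ast}d\eta \;=\; \pi^{\ast}\omega^{0} + \pi^{\ast}(\omega - \omega^{0}) \;=\; \pi^{\ast}\omega ,
\]
as required. Applying this construction to the two $S^{1}$-factors in turn, with the prescribed representatives $\omega_{1}, \omega_{2}$, produces the desired pair $\theta_{1}, \theta_{2}$ on $M$.

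The one non-routine point is the exactness of $\omega - \omega^{0}$ at the orbifold level, since it relies on the identification between the abstract characteristic class and the Chern--Weil class, and on de Rham-type isomorphism for orbifolds. Both of these facts are classical for orbifolds with finite isotropy (cf.\ \cite[Theorem 4.3.16]{BG}), so the main conceptual work has already been done in the references; the content of the theorem is really the extraction of this explicit normal form for the connection from that machinery.
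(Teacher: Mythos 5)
Your core mechanism---take an arbitrary connection from \cite[Theorem 3.16]{LTX}, identify its curvature class via orbifold Chern--Weil theory, write the discrepancy with the prescribed representative as $d\eta$ by orbifold de Rham exactness, and correct by a pullback $\pi^*\eta$---is precisely the paper's proof, and your $S^1$-level argument is correct as far as it goes. The gap lies in your preliminary reduction to a tower $M\to M_1\to X$ of $S^1$-orbibundles (a device the paper uses only for the topological arguments of Corollary \ref{cormain}, not in this proof). In this theorem, and in its application in Theorem \ref{toric} where $\theta=\theta_1+i\theta_2$ must be a principal connection and $\omega=\pi^*(\omega_X)+\theta_1\wedge\theta_2$, ``connection $1$-forms'' means the two components of a $T^2$-connection on $\pi\colon M\to X$: writing $\xi_1,\xi_2$ for the fundamental vector fields, one needs $\theta_i(\xi_j)=\delta_{ij}$ and invariance under the whole $T^2$. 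Your first-stage form, pulled back from $M_1$, does satisfy this. But your second-stage $\theta_2$ is merely a connection for $\pi_2\colon M\to M_1$: the existence theorem gives $\theta_2(\xi_2)=1$ and equivariance along the fibres of $\pi_2$, but imposes no control on $\theta_2(\xi_1)$ and no invariance under the first circle factor, and your correction $\pi_2^*\eta$ with $\eta$ a $1$-form on $M_1$ restores neither. On the trivial bundle $M=X\times T^2$ with $\omega_2=0$, the form $\theta_2=dt_2+dt_1$ is a legitimate output of your construction ($d\theta_2=0=\pi^*\omega_2$), yet $\theta_2(\xi_1)=1$, so the pair $(\theta_1,\theta_2)$ is not a $T^2$-connection and $\theta_1\wedge\theta_2$ does not restrict to the fibre area form.

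The fix is what the paper does: skip the tower and apply the existence result directly to the principal $T^2$-orbibundle $M\to X$. The resulting pair automatically satisfies $\theta_i(\xi_j)=\delta_{ij}$, and equivariance for the abelian structure group $T^2$ reduces to plain invariance (the point the paper makes explicitly); the corrections $\pi^*\alpha_i$ then preserve all of these conditions, since pullbacks from $X$ annihilate both vertical fields and are $T^2$-invariant, exactly as in your $S^1$-argument. Your tower could be salvaged, but only with extra work you have not supplied: average $\theta_2$ over the $T^2$-action (admissible because the target curvature $\pi^*\omega_2$ is invariant, so the average is still a connection for $\pi_2$ with the same curvature); then $\iota_{\xi_1}d\theta_2=\iota_{\xi_1}\pi^*\omega_2=0$ together with $\mathcal L_{\xi_1}\theta_2=0$ forces $\theta_2(\xi_1)$ to be a constant $c$; replacing $\theta_2$ by $\theta_2-c\,\theta_1$ restores the normalization but shifts the curvature to $\pi^*(\omega_2-c\,\omega_1)$, and one must then invoke Chern--Weil for the corrected $T^2$-connection to conclude $c\,[\omega_1]=0$ in de Rham cohomology, so that either $c=0$ or $\omega_1=d\eta'$ is exact and a final correction by $c\,\pi^*\eta'$ recovers $d\theta_2=\pi^*\omega_2$. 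None of this is routine enough to leave implicit, so as written the proposal does not establish the statement in the form the paper needs.
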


\begin{proof}
The proof is the same as for the smooth case. By the existence result mentioned above  we have connection 1-forms $\theta_i$ on $M$ with curvature  2-forms on $X$ $d\theta_i = \pi^*(\tilde{\omega}_i)$ where by the Chern-Weil isomorphism $[\tilde{\omega_i}] \in (\alpha,\beta)$. Then  $\tilde{\omega_i} = \omega_i+d \alpha_i$ and  the 1-forms $\theta_i+\pi^*(\alpha_i)$ define the required 1-forms. They clearly are closed and in the class $(\alpha,\beta)$. To see that they define connections one has to check that they are equivariant. But for Abelian groups, the equivariant condition is just $T^2$-invariant one.
\end{proof}
Recall that a holomorphic vector bundle $V$ over a K\"ahler manifold $(X,\omega)$ is called {\it stable} with respect to  the K\"ahler form $\omega$ if,  for every proper coherent subsheaf $E$ of the sheaf of local holomorphic sections of $V$,  the inequality   $deg(E)/rk(E) < deg(V)/rk(V)$ holds, where the degree  is calculated with respect to $\omega$.  The degree of a bundle is the product of its first Chern class with the appropriate power of the cohomology class of $\omega$, so the notion of stability may depend on the choice of   the cohomology class $[\omega^{n-1}]$.
The same definition works in the orbifold case.

In view of Donaldson-Uhlenbeck-Yau Theorem \cite{Donaldson, UY} an irreducible Hermitian vector bundle  $V$ of degree 0 over a K\"ahler manifold $(X,\omega)$ is stable if and only if $V$ has a  Hermitian-Yang-Mills metric $H$, i.e. a Hermitian metric $H$ along the fibers of $V$ whose curvature $F_H$ satisfies $F_H\wedge\omega^{n-1} = F_H^{2,0}=F_H^{0,2} = 0$, where $n$ is the complex dimension of $X$.

This result can be  extended  in two directions. First, the notion of degree of a vector bundle can be defined with respect to some non-K\"ahler metrics of special type, called {\it Gauduchon} metrics, i.e. an  Hermitian  metrics  $\omega$  satisfying  the condition $ \partial \overline \partial \omega^{n-1} = 0$  \cite{LiYau}.  Since a  balanced metric is  a Hermitian metric for which $d\omega^{n-1}=0$,  every balanced metric is also Gauduchon. When the degree of the bundle $V$ is 0, the Hermitian-Yang-Mills condition is conformally invariant,
so the stability is well defined for conformally balanced metrics. The second direction is that $X$ could be a K\"ahler orbifold with \lq \lq nice" singularities. For instance, C. Simpson proved in  \cite{Simpson} the Donaldson-Uhlenbeck-Yau Theorem on certain non-compact K\"ahler manifolds with an appropriate condition at infinity. From his result follows that the Donaldson-Uhlenbeck-Yau theorem is true for  K\"ahler orbifolds with isolated cyclic singularities, since the manifold obtained after deleting the singularities satisfies Simpson's condition  \cite{Simpson}.

The following statement is a combination of several known results for manifolds, which we formulate for orbifolds and  we will use in Sections 3 and 4.

\begin{theorem}\label{toric}
Let $(X, \omega_X)$ be a  K3-orbifold (or Calabi-Yau 2-fold) with  a smooth K\"ahler form $\omega_X$. Let $\omega_1,\omega_2$ be rational cohomology $(1,1)$-classes. Assume that the $T^2$-bundle $\pi: M\rightarrow X$ defined by $c_1(M/X)=[\omega_1,\omega_2]$ is smooth. If $\theta_i$ are  the connection 1-forms with $d\theta_i = \pi^*\omega_i$, then $M$ admits a complex structure such that $\theta = \theta_1+i\theta_2$ is (1,0)-form and $\pi$ is a holomorphic projection. For the complex manifold $M$ the following properties  hold:

i) The Hermitian metric on $M$ defined by  the $(1,1)$-form $\omega = \pi^*(\omega_X) + \theta_1\wedge\theta_2$ is balanced (i.e. $d\omega^2 = 0$) iff $tr_{\omega_X}\omega_1 = tr_{\omega_X}\omega_2 = 0$. If we choose $\omega_1,\omega_2$ to be harmonic, then this is equivalent to the topological condition $[\omega_X]\cup[\omega_1] =[\omega_X]\cup[\omega_2] =0$.

ii) If $\psi_X$ is a holomorphic $(2,0)$-form on $X$ with $||\psi||_{\omega_X} = const$, then the form $\psi = \psi_X\wedge \theta$ is  holomorphic with constant norm with respect to $\omega$.

iii) For every smooth function $u$ on $X$, the metric $\omega_u = e^u\pi^*(\omega_X)+\theta_1\wedge\theta_2$  on $M$  is conformally balanced with conformal factor $||\psi||_{\omega_u}$.

iv) If $E$ is a stable bundle on $X$ with respect to $\omega_X$ of degree 0 and Hermitian-Yang-Mills metric $h$ and curvature $F_h$, then $V=\pi^*(E)$ is a  stable bundle of degree 0 on $M$ with respect to $\omega_u$ with Hermitian-Yang-Mills metric $H=\pi^*(h)$ and curvature $F_H:=\pi^*(F_h)$.
\end{theorem}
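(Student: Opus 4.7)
The first step is to build the complex structure on $M$. I would decree that the $(1,0)$-forms are generated by horizontal lifts of $(1,0)$-forms on $X$ together with $\theta = \theta_1 + i\theta_2$. Since $\omega_1,\omega_2$ are $(1,1)$-forms on $X$, the identity $d\theta = \pi^*(\omega_1 + i\omega_2)$ shows that $d\theta$ is of pure type $(1,1)$ on $M$. By Newlander--Nirenberg the almost complex structure is integrable, and by construction $\pi$ is holomorphic.

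For (i), I would compute $\omega^2 = \pi^*\omega_X^2 + 2\,\pi^*\omega_X \wedge \theta_1 \wedge \theta_2$ and use $d(\theta_1 \wedge \theta_2) = \pi^*\omega_1 \wedge \theta_2 - \theta_1 \wedge \pi^*\omega_2$ to obtain
$$d\omega^2 = 2\,\pi^*(\omega_X \wedge \omega_1) \wedge \theta_2 - 2\,\theta_1 \wedge \pi^*(\omega_X \wedge \omega_2),$$
which vanishes iff $\omega_X \wedge \omega_i = 0$ for $i = 1,2$. On the complex surface $X$ this is exactly $\tr_{\omega_X}\omega_i = 0$; when $\omega_i$ are harmonic it is the cohomological identity $[\omega_X]\cup[\omega_i] = 0$.

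For (ii), $\psi = \psi_X \wedge \theta$ is of type $(3,0)$ on the $3$-fold $M$, so $\partial\psi$ vanishes automatically and $d\psi = -\psi_X \wedge \pi^*(\omega_1 + i\omega_2) = 0$ for type reasons: $\psi_X \wedge \omega_i$ would be of bidegree $(3,1)$ on the complex surface $X$. Taking the Ricci-flat representative of $[\omega_X]$ via the orbifold Calabi--Yau theorem, I can normalize $\psi_X \wedge \overline{\psi_X} = C\,\omega_X^2$ with $C$ constant; combined with $\omega^3 = 3\,\pi^*\omega_X^2 \wedge \theta_1 \wedge \theta_2$ and $\theta \wedge \overline{\theta} = -2i\,\theta_1 \wedge \theta_2$, this yields $\|\psi\|_\omega$ constant. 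For (iii), since $\psi$ has two horizontal legs one gets $\|\psi\|_{\omega_u}^2 = e^{-2u}\|\psi\|_\omega^2$, whence
$$\|\psi\|_{\omega_u}\,\omega_u^2 = \|\psi\|_\omega\bigl(e^u\,\pi^*\omega_X^2 + 2\,\pi^*\omega_X \wedge \theta_1 \wedge \theta_2\bigr).$$
The first summand is closed because $du \wedge \omega_X^2 = 0$ on $X$ for degree reasons, and the second is closed by the primitivity of $\omega_1,\omega_2$ established in (i).

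Finally for (iv), $F_H = \pi^*F_h$ is of pure type $(1,1)$, and $F_H \wedge \omega_u^2 = 2e^u\,\pi^*(F_h \wedge \omega_X) \wedge \theta_1 \wedge \theta_2 = 0$ since $F_h$ is Hermitian--Yang--Mills on $(X,\omega_X)$; hence $H = \pi^*h$ is Hermitian--Yang--Mills for $V$ with respect to $\omega_u$, and polystability of $V$ follows from the orbifold Donaldson--Uhlenbeck--Yau theorem (via Simpson) quoted earlier. The main obstacle I anticipate is the orbifold bookkeeping: verifying that orbifold-smooth tensors on $X$ (including $\omega_X$, $\omega_i$, $\psi_X$, and $h$) pull back to genuinely smooth tensors on the manifold $M$, and that the connection $1$-forms $\theta_i$ from Theorem \ref{conectionforms} are smooth on $M$ (not merely orbifold-smooth). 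This reduces to a local analysis near each $T^2$-orbit above a singular point of $X$, where the hypothesis that $M$ is smooth forces the bundle chart to simultaneously resolve the orbifold isotropy; once this is checked, the remaining computations reduce to the standard Goldstein--Prokushkin arguments of \cite{GP} performed fiberwise over $X$.
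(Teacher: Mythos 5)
Your proposal is correct and is in substance the same proof the paper intends: the paper's own argument for Theorem \ref{toric} consists of the single remark that the proof is the same as in the smooth case, citing \cite{GP} (see also \cite{GGP}) for the complex structure and parts (i)--(ii) and \cite{LiYau,FuYau} for parts (iii)--(iv), so you have simply executed those standard Goldstein--Prokushkin/Li--Yau/Fu--Yau computations explicitly (with the correct scaling $\|\psi\|_{\omega_u}=e^{-u}\|\psi\|_{\omega}$, and correctly noting that (ii) requires the Ricci-flat representative of $[\omega_X]$, consistent with the paper's implicit normalization that $\psi_B$ is unitary in Section 3). Your closing paragraph on transferring orbifold-smooth data ($\omega_X$, $\omega_i$, $\psi_X$, $h$, and the connection forms of Theorem \ref{conectionforms}) to genuinely smooth tensors on the smooth total space $M$, via the local model near the fibers over singular points where the isotropy injects into $T^2$, is exactly the orbifold bookkeeping the paper leaves implicit behind the hypothesis that $M$ is smooth.
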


\begin{proof}
The proof is the same as in the smooth case. Part $i)$ and $ii)$ are in \cite{GP} (see also \cite{GGP}). Parts $iii)$ and $iv)$ are in \cite{LiYau, FuYau}.
\end{proof}

The next results of this sections will be used in Section 4 to construct explicit examples and to prove Theorem B.
Here we consider $T^2$-bundles over an orbifold $X$ which are given by the following sequence
$$
\xymatrix{
S^1 \ar@{^{(}->} [r] &M \ar@<-0.4ex>[d] \\
S^1 \ar@{^{(}->} [r] &M_1 \ar@<-0.4ex>[d] \\
& X\,
}
$$
where $M_1\to X$ is  a Seifert $S^1$-bundle, $M_1$ is smooth and $M\to M_1$ is a regular principal $S^1$-bundle over $M_1$.
For the definition of {\em Seifert $S^1$-bundle} we refer to \cite[Definition 4.7.6]{BG}. Roughly speaking,  Seifert  $S^1$-bundles  are  spaces  with  a locally free $S^1$-action, for which the $S^1$-foliation has an orbifold leaf space.  A  multiple leave is an $S^1$-orbit on which the action is not globally free.

The following is Theorem 4.7.3 in \cite{BG}, which is proven by  Koll\'ar in \cite{Kollar2004}. For convenience we use  \lq \lq divisors" to refer to rational or integral Cartier divisors to distinguish them from the Weil divisors.
In this terminology an ample rational divisor has as a  Poincar\'e  dual a rational K\"ahler form and vice versa.
\begin{theorem}
Let $X$ be a normal reduced complex space with at worst quotient singularities and $\Delta=\sum_i (1-\frac{1}{m_i})D_i$
be a $\mathbb{Q}$ divisor (this is the data associated to an orbifold). Then there is a one-to-one correspondence between Seifert ${\mathbb C}^*$-bundles $f:Y\rightarrow (X,\Delta)$ and the  following data:

(i) For each $D_i$ an integer $0\leq b_i< m_i$ relatively prime to $m_i$, and

(ii) a linear equivalence class of Weil divisors $B\in Div(X)$.
\end{theorem}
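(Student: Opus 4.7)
The plan is to reduce the classification to a local analysis at each ramification divisor $D_i$ combined with global data on the orbifold smooth locus. First I would fix notation: let $X^{\circ}\subset X$ denote the locus where $(X,\Delta)$ is a smooth orbifold, i.e.\ $X$ minus the support of $\Delta$ and the isolated singularities of $X$. Over $X^{\circ}$ a Seifert $\mathbb{C}^{*}$-bundle is an ordinary $\mathbb{C}^{*}$-bundle, hence classified by $\mathrm{Pic}(X^{\circ})$. Because $X$ has only quotient singularities and the $D_i$ are prime divisors, $X^{\circ}$ is big in $X$ and restriction induces a surjection from the Weil class group $\mathrm{Cl}(X)$ onto $\mathrm{Pic}(X^{\circ})$ whose kernel is generated by the classes of the $D_i$. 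This is what will give the global invariant $B\in\mathrm{Div}(X)$, defined up to the local twisting data along each $D_i$.

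Next I would analyze the local model at a generic point of each $D_i$. \'Etale locally the orbifold is $\mathbb{C}^2/\mathbb{Z}_{m_i}$ with the cyclic action $(x,y)\mapsto(\zeta_{m_i}x,y)$, and a Seifert $\mathbb{C}^{*}$-bundle here is an equivariant line bundle (minus its zero section) for this action. Equivariant line bundles on $\mathbb{C}^2$ for a linear torsion action are classified by characters of $\mathbb{Z}_{m_i}$, i.e.\ by an integer $b_i$ modulo $m_i$. The requirement that the Seifert structure be effective along the fibers—which is part of the definition of a Seifert bundle and prevents the total space from acquiring a nontrivial generic inertia—forces the character to be faithful, so $\gcd(b_i,m_i)=1$; normalizing to $0\le b_i<m_i$ fixes the local invariant.

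To finish one glues the invariants: given $(b_i)$ and a class $B$, construct $Y$ by taking the $\mathbb{C}^{*}$-bundle over $X^{\circ}$ associated to $B|_{X^{\circ}}$ and attaching along each $D_i$ the local equivariant model determined by $b_i$; conversely, from $f\colon Y\to(X,\Delta)$ read off the $b_i$ from the local Seifert type at the generic point of $D_i$ and let $B$ be a Weil divisor lifting the class of $Y|_{X^{\circ}}\in \mathrm{Pic}(X^{\circ})$. The main obstacle is showing that the map is well-defined and bijective: one must verify that the ambiguity in lifting $\mathrm{Pic}(X^{\circ})$ to $\mathrm{Cl}(X)$ (i.e.\ addition of multiples of the $D_i$) is precisely absorbed by the normalization $0\le b_i<m_i$, and that every admissible pair $((b_i),B)$ is realized. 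The latter uses that quotient singularities are $\mathbb{Q}$-factorial, so the local extension across $D_i$ with a prescribed character is unobstructed and compatible with arbitrary Weil divisor data on $X^{\circ}$.
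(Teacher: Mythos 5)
A preliminary remark on the comparison you asked for: the paper offers no proof of this statement at all --- it is quoted as Theorem 4.7.3 of Boyer--Galicki, with the proof attributed to Koll\'ar's paper on Seifert $G_m$-bundles --- so your attempt has to be measured against Koll\'ar's argument. Your first two steps are sound and do match that argument in outline: over the locus where the orbifold structure is trivial a Seifert $\mathbb{C}^*$-bundle is an honest principal bundle, and at a generic point of $D_i$ the local model is an equivariant line bundle for a cyclic action, whose character gives $b_i$ with $\gcd(b_i,m_i)=1$ forced by freeness of the $\mu_{m_i}$-action on the total space (your ``no generic inertia'' condition).

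The gap is in your gluing/bijectivity step, and it is fatal as written. You claim that the ambiguity in lifting the class of $Y|_{X^{\circ}}$ from $\mathrm{Pic}(X^{\circ})$ to $\mathrm{Cl}(X)$ --- addition of multiples of the $D_i$ --- is ``precisely absorbed'' by the normalization $0\le b_i<m_i$. It is not. Replacing $B$ by $B+\sum_i k_iD_i$ changes neither $Y|_{X^{\circ}}$ (each $D_i$ restricts to zero on $X^{\circ}$) nor the local characters $b_i$ (they are already reduced mod $m_i$), yet it changes the Seifert bundle: the orbifold Chern class $c_1(Y/X)=B+\sum_i (b_i/m_i)D_i$ moves by the integral class $\sum_i k_iD_i$. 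The simplest counterexample is $X=\mathbb{P}^1$ with $\Delta=(1-\frac12)\{0\}$: there $\mathrm{Pic}(X^{\circ})=0$ and one may take $b=1$ throughout, but the pairs $(B=k\cdot\{\mathrm{pt}\},\,b=1)$, $k\in{\mathbb Z}$, give pairwise non-isomorphic Seifert bundles, distinguished already by $c_1(Y/X)=k+\frac12$ (equivalently by the first homology of the associated circle bundles, which are different lens spaces). So your recipe ``read off $b_i$ locally, let $B$ be any lift of $Y|_{X^{\circ}}$'' is not well defined: the extension of $Y|_{X^{\circ}}$ across $D_i$ with prescribed character is \emph{not} unique --- the identification of the local equivariant model with the bundle over the punctured neighborhood carries an integer's worth of winding, and the set of extensions is a torsor under $\bigoplus_i{\mathbb Z}\,[D_i]\subset\mathrm{Cl}(X)$, which is exactly the information your data forgets ($\mathbb{Q}$-factoriality is not the relevant mechanism here). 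Koll\'ar's proof sidesteps gluing entirely: he takes the weight decomposition $f_*\mathcal{O}_Y=\bigoplus_{j\in{\mathbb Z}}A_j$ into rank-one reflexive sheaves and identifies $A_j\cong\mathcal{O}_X\bigl(-jB-\sum_i\lfloor jb_i/m_i\rfloor D_i\bigr)$ (up to sign conventions), which records $B$ in $\mathrm{Cl}(X)$ itself rather than modulo $\langle D_i\rangle$, and simultaneously handles the codimension-two singularities of $X$, since a rank-one reflexive sheaf is determined by its restriction to a big open set. If you want to salvage your local-to-global approach you must carry the gluing class along each $D_i$ as part of the classifying data; note also that for a general normal \emph{complex space} (the actual setting of the theorem) $\mathrm{Pic}(X^{\circ})$ need not be generated by divisors, so even your first reduction requires the pushforward-sheaf argument rather than divisor-class bookkeeping.
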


For the purposes of the present paper,  we focus on the case when $Y$ is smooth and we need  to consider the \lq\lq smoothness part\rq\rq \, of Theorem 4.7.7 in \cite{BG}:

\begin{theorem}\label{localseifertsmooth}

If $(X,\Delta)$ is a locally cyclic orbifold as in the Theorem above and $f: Y \rightarrow (X,\Delta)$  is an $S^1$-orbibundle whose local uniformizing groups inject into the group $S^1$ of the orbibundle, then $f: Y \rightarrow (X,\Delta)$ is a Seifert $S^1$-bundle and $Y$  is smooth.

\end{theorem}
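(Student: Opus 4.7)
The plan is to establish both claims locally around an arbitrary point of $Y$, using the orbifold-bundle chart description. Pick a point $x\in X$ with local uniformizing group $G_x$; by the locally cyclic hypothesis $G_x$ is a cyclic group, and there is an orbifold chart $U/G_x$ about $x$ where $U$ is an open subset of some $\mathbb{C}^n$ on which $G_x$ acts in the standard way. By the definition of an $S^1$-orbibundle, the preimage $f^{-1}(U/G_x)$ is modeled by $(U\times S^1)/G_x$, where $G_x$ acts diagonally, on the first factor by the given orbifold action and on the second factor via a group homomorphism $\rho: G_x \to S^1$. This $\rho$ is the map that the hypothesis assumes to be injective.

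The smoothness of $Y$ then reduces to a single observation. Since $S^1$ acts on itself by translation without fixed points and $\rho$ is injective, $G_x$ acts freely on the $S^1$ factor; a fortiori, the diagonal action of $G_x$ on $U\times S^1$ is free. A free action of a finite group on a smooth manifold yields a smooth manifold quotient, so $(U\times S^1)/G_x$ is smooth. Because this argument applies verbatim in every orbifold chart of $X$ and the local models are compatible by construction of the orbibundle, $Y$ is smooth globally.

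For the Seifert $S^1$-bundle conclusion, I would exhibit the structure directly from the same local model. The translation $S^1$-action on the second factor of $U\times S^1$ commutes with the image $\rho(G_x)$, so it descends to a smooth $S^1$-action on $(U\times S^1)/G_x$; patching over charts, this gives a global $S^1$-action on $Y$ with orbit space $X$. The isotropy of the $S^1$-action on the fiber over a point whose $X$-stabilizer is $H\subseteq G_x$ is precisely $\rho(H)$, a finite cyclic subgroup of $S^1$. This orbit-type behaviour, with finite cyclic stabilizers supported on the branch divisor $\Delta$, is exactly the defining data of a Seifert $S^1$-bundle in the sense of \cite[Def.~4.7.6]{BG}, so the structure is automatically Seifert.

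The main obstacle I anticipate is not the algebra of the quotient but rather confirming that the orbibundle axioms genuinely force the local model to be the product $(U\times S^1)/G_x$ with a diagonal action of the advertised form, as opposed to something more twisted. Once that is extracted from the framework in \cite[Ch.~4]{BG} and cross-checked with the Koll\'ar correspondence recalled above, the injectivity of $\rho$ is exactly the right condition to kill every fixed point of $G_x$ on the total space, and both smoothness and the Seifert structure follow at once.
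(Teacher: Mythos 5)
Your proof is correct, but be aware that the paper itself contains no proof of this statement: it is imported verbatim as the ``smoothness part'' of Theorem 4.7.7 of \cite{BG}, so the only comparison available is with the standard local argument underlying that theorem, and yours is exactly it. The one point you flag as a potential obstacle --- whether the orbibundle axioms really force the local model $(U\times S^1)/G_x$ with a diagonal action of the advertised form --- dissolves once you consult the definition: in the Satake/Boyer--Galicki notion of a (principal) orbibundle, the local uniformizing group $\Gamma_i$ acts on the local trivialization $\tilde U_i\times S^1$ through a fixed homomorphism $h_i\colon \Gamma_i\to S^1$ into the structure group, constant along the fiber, and the hypothesis that the ``local uniformizing groups inject into the group $S^1$ of the orbibundle'' is precisely the injectivity of these $h_i$. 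Granting that, your observations are complete: injectivity of $\rho=h_i$ makes each nontrivial element act by a fixed-point-free rotation on the fiber, hence the diagonal $G_x$-action on $U\times S^1$ is free and the quotient is a smooth manifold; the right translations commute with $\rho(G_x)$, so the $S^1$-action descends and patches globally; and your identification of the isotropy over $[u]$ as $\rho\bigl((G_x)_u\bigr)$, a finite cyclic subgroup of $S^1$, exhibits $f\colon Y\to (X,\Delta)$ as a Seifert $S^1$-bundle in the sense of \cite[Def.~4.7.6]{BG}, consistent with the Koll\'ar correspondence recalled just before the statement.
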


For algebraic orbifolds this could be refined (see  Theorem 4.7.8 in \cite{BG} and \cite{Kollar2005}) and for an orbifold $(X,\Delta)$ with trivial $H^1_{orb}(X,{\mathbb Z})$ the Seifert $S^1$-bundle  $Y$  is uniquely determined by its first Chern class $c_1(Y/X)\in H^2(X,{\mathbb Q})$, which is defined as $[B] + \sum_i \frac{b_i}{m_i} [D_i]$.


Next we focus on  $T^2$-bundles over an orbifold surface  $X$. The  general theory of such spaces from the foliations view-point is given in \cite{HS}. In particular, such bundles are determined by two rational divisors on the base orbifold.

We want to use the well-known examples of K3 orbifolds (or Calabi-Yau surfaces) to construct examples carrying solutions of the Hull-Strominger system. Since the examples are  generic hypersurfaces or   complete intersections in weighted projective spaces, we need a criteria for smoothness of the toric bundles over them. To do this we use an indirect approach - instead of using explicit equations we use the fact that the links which these surfaces define are already smooth  Seifert $S^1$-bundles. The following statement could be generalized in many directions, but we need it in this form to construct examples (note that the space $M$ is auxiliary,  it is not used in the constructions later):

\begin{theorem} \label{cororb}

Let $X$ be a   compact complex 2-dimensional   orbifold with only isolated  $A_1$-singularities  and an ample divisor $\bar{H}\in Pic(X)$. Assume that the $S^1$-Seifert bundle $Y \rightarrow X$ defined by $c_1 (Y/X) = H = \frac{p}{q}\bar{H}$ with $\frac{p}{q}>0$ is smooth. Then the  blow-up $\tilde{X}$  of $X$ at any number $k\geq 1$ of singular points has a rational  divisor $D$ and a real positive $(1,1)$-class $\omega$ with the following properties:

i)  $D.\omega =0$ and we say that $D$ is \lq \lq traceless" with respect to $\omega$ in analogy with the smooth case.

ii) If  the canonical bundle of $X$ is trivial, i.e. $K_X=0$, and $\pi_1^{orb}(X)=1$, then $K_{\tilde{X}} = 0$ and $\pi^{orb}_1 (\tilde{X})=1$.

iii) The $S^1$-Seifert bundle $\tilde{Y} \rightarrow \tilde{X}$ determined by  the  cohomology class  $c_1 (\tilde{Y}/\tilde{X}) = D$ is smooth.

iv) The pullback of $\omega$ to  $\tilde{Y}$ is smooth.

\end{theorem}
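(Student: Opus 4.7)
The strategy is to construct $D$ and $\omega$ on the partial resolution $\pi : \tilde X \to X$ in terms of the exceptional $(-2)$-curves $E_1,\ldots,E_k$ and then verify the four items by elementary intersection theory on $\tilde X$.

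Part (ii) follows because $A_1$-singularities are Du Val (canonical, discrepancy zero), so the minimal resolution is crepant and $K_{\tilde X} = \pi^* K_X = 0$. For the orbifold fundamental group, each blown-up $A_1$ point has a neighborhood homeomorphic to the cone over $\mathbb{RP}^3$, replaced in $\tilde X$ by a neighborhood of a simply connected exceptional $\mathbb{P}^1$; this fills in the local $\mathbb{Z}_2$-loop of the link. A van Kampen argument together with $\pi_1^{orb}(X) = 1$ then yields $\pi_1^{orb}(\tilde X) = 1$.

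For (i), (iii), (iv), fix a smooth Kähler form $\omega_X$ on $X$ and set
\[
\omega := \pi^*\omega_X - \epsilon \sum_{i=1}^k E_i, \qquad D := \pi^* H - a \sum_{i=1}^k E_i,
\]
with $\epsilon, a > 0$ to be determined. Using $E_i^2 = -2$, $E_i \cdot E_j = 0$ for $i \neq j$, and $\pi^*(\cdot)\cdot E_i = 0$ (projection formula), one computes
\[
\omega \cdot E_i = 2\epsilon, \qquad \omega^2 = \omega_X^2 - 2k\epsilon^2, \qquad D \cdot \omega = H \cdot \omega_X - 2ak\epsilon.
\]
For $\epsilon$ small, $\omega$ is a Kähler class on $\tilde X$ by the standard small-blow-up construction. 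Imposing $D \cdot \omega = 0$ forces $a = H \cdot \omega_X / (2k\epsilon)$, which is positive since $H \cdot \omega_X > 0$. Taking $\epsilon = H \cdot \omega_X / (2kn)$ for a large positive integer $n$ makes $a = n$ integral and $\epsilon$ arbitrarily small, proving (i).

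For (iii), Theorem \ref{localseifertsmooth} reduces smoothness of $\tilde Y$ to the injection of each local uniformizing group of $\tilde X$ into $S^1$. After first replacing $\bar H$ by a base-point-free positive multiple (absorbing the scaling into $p/q$) and choosing a representative avoiding the $k$ blown-up points, $\pi^* H$ is locally trivial near those points, so $D$ there equals $-n E_i$, an integral class; hence the Seifert bundle is a genuine smooth $S^1$-bundle on the blown-up smooth part. At each remaining $A_1$-singularity the $E_i$ lie in disjoint neighborhoods, so $D$ agrees locally with $\pi^* H = H$, and the local homomorphism $\mathbb{Z}_2 \to S^1$ is identical to the one coming from $Y \to X$, injective by hypothesis. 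This proves (iii). Finally (iv) is immediate, since $\omega$ admits a smooth Kähler representative on $\tilde X$ and the projection $\tilde Y \to \tilde X$ is smooth. The main technical hurdle is to arrange tracelessness, positivity of $\omega$, and preservation of the local Seifert data simultaneously; the negativity $E_i^2 = -2$ of the exceptional curves makes the system solvable with a single scaling parameter.
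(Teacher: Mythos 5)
Your proposal is correct and takes essentially the same route as the paper: perturb the pull-back of $H$ (resp.\ of a K\"ahler class) by the exceptional $(-2)$-curves, solve the resulting linear intersection equation to get tracelessness, use crepancy of the $A_1$-resolution together with triviality of $\pi_1^{orb}(X)$ for (ii), and reduce smoothness of $\tilde{Y}$ via Theorem \ref{localseifertsmooth} to the local Seifert data, which near the surviving singular points coincides with that of $H$ and near the $E_i$ is integral. The only cosmetic differences are that the paper scales up ($\omega = nH - E_1 - \cdots - E_k$, $D = H + mE_1$, with positivity via Nakai--Moishezon on singular surfaces) where you scale the exceptional part down and invoke the standard small-blow-up K\"ahler construction, and you argue $\pi_1^{orb}(\tilde{X})=1$ by van Kampen rather than citing Koll\'ar/Verbitsky; note in passing that your sign convention $D = \pi^*H - a\sum_i E_i$ is the consistent one, since with the paper's $D = H + mE_1$ the displayed intersection number is $nH.H + 2m$, which cannot vanish for $m,n>0$.
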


\begin{proof}
Since all isolated singularities of type  $A_1$  give rise to a crepant resolution, we have that  the canonical bundle triviality is preserved.   Moreover, by \cite[Thm. 7.8]{Kollar}   or  \cite[Thm. 4.1]{Verbitsky}  we  get $\pi_1^{orb}(\tilde{X})=1$.

In case of blow-up of  $A_1$ singularity at a singular point, the exceptional divisor $E$ is a rational curve with  self-intersection $-2$, i.e. $E.E=-2$. In particular the Picard group of $\tilde{X}$ is generated by the pull-back of $H$ and the rational curves $E_1,..,E_k$,  if $\tilde{X}$ is obtained by blowing up $k$ singular points. There are also the relations $H.H>0$, $E_i.E_i=-2$, $H.E_i=0$ and $E_i.E_j = 0$,  for every $i,j=1,2,..,k$, $i\neq j$.

In general,  by the Nakai-Moishezon criteria  a (1,1)-class  associated to a divisor $ L$ on surface  is positive  if the  $ L. L>0$ and
$ L.C>0$,  for every curve  $C$. The criteria is valid for singular surfaces and real (1,1)-classes, see e.g.  \cite{CP}.  Since $H$ is positive (ample), by this criteria we can find $n$ large enough such that $nH-E_1-E_2-\ldots -E_k$ is positive. This follows from the fact that every curve in $\tilde{X}$ is either a pull-back of a curve in $X$ or belongs to the exceptional divisor. Note that $n$ does not have to be integer but only a rational number. Since $H$ is rational and ample, its  pull-back to $\tilde{X}$ defines a smooth (in orbifold sense) (1,1)-form. Combining it with the fact that the exceptional curves define divisor with a well defined smooth first Chern class, $nH-E_1-...-E_k$ provides a positive smooth form $\omega$ on $\tilde{X}$. For the primitive divisor $D$ we can choose $D=H+mE_1$.
We need to prove that there exists an integer $m>0$ such that  the intersection number $(H + m E_1) . (n H- E_1-\ldots -E_k)$ vanishes. But from the intersection numbers above we have
$$
(H+mE_1).(n H-E_1-...-E_k) =  n H.H- mE_1.E_1 = n (H.H) - 2m
$$
and by appropriate choice of $n$ and $m$ which are also large enough, we can have the last number equal zero.

Now we have to check that $D$ defines a smooth  $S^1$-Seifert bundle $ \tilde{Y} \rightarrow \tilde{X}$. This follows from the assumption that $H$ defines a smooth such bundle and according to Theorem \ref{localseifertsmooth} and Proposition 4.7.8 in \cite{BG} all other conditions are about local divisibility of $D$ at the singular points.  But the remaining singular points on $\tilde{X}$ satisfy the conditions for the pull-back of $H$, since they are pull-backs of singular points on $X$ and $H$ satisfies these local conditions since by assumption $Y$  is smooth. Now $E_1,\ldots ,E_k$ do not intersect with the other singularities since they are isolated. So $D$ defines a smooth Seifert $S^1$-bundle $\tilde{Y}$. The last thing to check is that $\omega$ pulls-back to a smooth form on $\tilde{M}$. But since $\omega$ is smooth in the orbifold sense on $\tilde{X}$, then its pull-back is also smooth.
\end{proof}

In dimension 5 and 6 there are strong classification results about the topology of simply-connected compact smooth manifolds admitting  a free $S^1$-action. To have  a nice expression of the diffeomorphic type of these spaces one needs also the (co)homology groups to be torsion-less.
In general the torsion of the cohomology groups of an orbifold surface and their associated Seifert  $S^1$-bundles is a delicate question (see \cite{Kollar2005}). However we need a Calabi-Yau surface and,  for this type of surface,   Proposition  10.2 and Corollary 10.4  in \cite{Kollar2005}  provide  a simple condition to identify the Seifert $S^1$-bundle up to a diffeomorphism. We formulate it here in the form  which is relevant to our purpose:
\begin{theorem}\label{Kol} $($\cite{Kollar2005}$)$
Let $(X,\Delta)$ be a Calabi-Yau orbifold surface, i.e.  $K_X + \Delta$
is numerically trivial, and $\pi: Y \rightarrow (X,\Delta)$ be a Seifert $S^1$-bundle with $Y$ smooth and $H_1(Y,\mathbb{Z})=0$. Then $\Delta \equiv 0$, $X$  is a Calabi-Yau orbifold with trivial canonical class, and $Y$ has no torsion in the cohomology and is diffeomorphic to a connected sum of $k$ copies of $S^2\times S^3$, where $k\leq 21$ is equal to  $k = dim(H^2(X,\mathbb{Q}))-1$.
\end{theorem}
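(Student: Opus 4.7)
The plan is to combine a structural algebro-geometric argument about Seifert data with a Gysin-type cohomology computation and then invoke the Smale--Barden classification of simply-connected smooth $5$-manifolds.

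First I would argue $\Delta \equiv 0$. Write $\Delta = \sum_i (1-\tfrac{1}{m_i})D_i$. The local structure of a smooth Seifert $S^1$-bundle over $(X,\Delta)$ forces the isotropy group $\mathbb{Z}_{m_i}$ along $\pi^{-1}(D_i)$ to inject into the structure group $S^1$, and the Seifert invariants $(b_i,m_i)$ are then constrained by the ambient geometry. The hypothesis $H_1(Y,\mathbb{Z})=0$ is the crucial input: via the Wang/Mayer-Vietoris sequence relating $\pi_1(Y)$ to $\pi_1^{orb}(X)$ and the normal generators coming from the local Seifert data, any nontrivial branching $m_i > 1$ would contribute torsion classes to $H_1(Y,\mathbb{Z})$ unless they are killed by the global monodromy. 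A calculation using that $K_X + \Delta$ is numerically trivial (so $-K_X$ is effective and compatible with $\Delta$) rules out such cancellations, forcing each $D_i$ term to vanish. Once $\Delta = 0$, numerical triviality of $K_X$ together with $X$ being a K3-type orbifold (so $h^{2,0}(X) = 1$ by orbifold Riemann--Roch and Hodge theory) upgrades to honest triviality of $K_X$.

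Second, I would compute $H^*(Y,\mathbb{Z})$ using the Gysin sequence for $\pi\colon Y \to X$:
\begin{equation*}
\cdots \to H^{k-2}(X) \xrightarrow{\cup c_1(Y/X)} H^k(X) \to H^k(Y) \to H^{k-1}(X) \to \cdots
\end{equation*}
Since $\pi_1^{orb}(X) = 1$ for a simply-connected Calabi-Yau orbifold surface (as needed by the hypothesis $H_1(Y,\mathbb{Z})=0$ and the Seifert exact sequence), $Y$ is simply connected. The class $c_1(Y/X) \in H^2(X,\mathbb{Q})$ is nonzero (otherwise $Y$ would be disconnected or $S^1 \times X$, violating simple connectedness), so cup product with it is injective on $H^0(X) = \mathbb{Q}$, killing one generator of $H^2(X)$. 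This gives $b_2(Y) = \dim H^2(X,\mathbb{Q}) - 1$. Torsion-freeness of $H^*(Y,\mathbb{Z})$ follows from the torsion-freeness of the integral cohomology of a K3-type orbifold with only quotient singularities (which is inherited from the crepant resolution, itself a K3).

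Finally, $Y$ is a smooth, closed, simply-connected, spin $5$-manifold (spin because triviality of $K_X$ gives $c_1(TY) = 0$, hence $w_2(Y) = 0$) with torsion-free $H_2$. The Smale--Barden classification then identifies $Y$ uniquely up to diffeomorphism with $\sharp_k(S^2 \times S^3)$, where $k = b_2(Y)$. The bound $k \leq 21$ comes from $\dim H^2(X,\mathbb{Q}) \leq 22$, which is the $b_2$ of the crepant K3 resolution. The main obstacle I expect is the first step: ruling out a nontrivial boundary divisor $\Delta$ rigorously and extracting honest (as opposed to merely numerical) triviality of $K_X$ from the Seifert data together with $H_1(Y,\mathbb{Z})=0$; this is the purely algebro-geometric heart of Koll\'ar's argument and requires careful bookkeeping of the invariants $(b_i, m_i)$ and the class $B$.
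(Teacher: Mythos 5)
Your sketch reproduces the skeleton of Koll\'ar's argument, which is in fact all the paper does at this point: its ``proof'' of the theorem is a two-line citation of Corollary 10.4 (1) and (3) and Theorem 5.7 of \cite{Kollar2005}, together with the remark that the minimal resolution of $X$ is a K3, so $H_2(Y,\mathbb{Z})$ is torsion-free of rank at most $22$, whence $k\leq 21$. Measured against what that citation actually contains, your proposal has three genuine gaps, one of which you concede yourself. First, the deduction $\Delta\equiv 0$ (and the upgrade from numerical to honest triviality of $K_X$) from $H_1(Y,\mathbb{Z})=0$ is precisely the content of Koll\'ar's Proposition 10.2 and Corollary 10.4(1); your phrase ``a calculation \ldots rules out such cancellations'' is a placeholder, not an argument, and since this step feeds the other two (see below), the proposal does not close anywhere.

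Second, your cohomology step is wrong as stated. The Gysin sequence you invoke is valid for a Seifert bundle only with $\mathbb{Q}$-coefficients (so it does yield $b_2(Y)=\dim H^2(X,\mathbb{Q})-1$), but torsion-freeness of $H_2(Y,\mathbb{Z})$ is not ``inherited from the crepant resolution'': integral cohomology of the orbifold base neither pulls back through a Seifert fibration nor descends from its resolution. The correct mechanism is Koll\'ar's Theorem 5.7 --- exactly the result the paper cites for the formula for $k$ --- which gives $H_2(Y,\mathbb{Z})\cong \mathbb{Z}^{k}\oplus\sum_i(\mathbb{Z}/m_i)^{2g(D_i)}$ when $H_1(Y,\mathbb{Z})=0$, with torsion contributed by branch divisors of positive genus; torsion-freeness is therefore a consequence of $\Delta=0$, i.e.\ it sits downstream of the step you left open and cannot be obtained independently of it. Third, $H_1(Y,\mathbb{Z})=0$ does not imply $\pi_1(Y)=1$ (perfect fundamental groups exist), and the Smale--Barden classification requires simple connectivity; your parenthetical derivation of $\pi_1^{orb}(X)=1$ from the hypothesis ``and the Seifert exact sequence'' only controls abelianizations. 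That this is a real issue, not bookkeeping, is shown by the Kummer orbifold $T^4/\pm 1$: there $K_X$ is trivial and $\Delta=0$, yet $\pi_1^{orb}(X)$ is infinite, and the paper itself flags this case as producing non-simply-connected $Y$. (Your spin argument via ``$c_1(TY)=0$'' is also loose --- $Y$ is a real $5$-manifold, so one must stabilize $TY$ and compare with the orbifold pullback of $T_X$ across the singular fibers --- but that is repairable.) In short: right architecture, but each of the three pillars is asserted rather than proven, and the second rests on an incorrect mechanism.
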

\begin{proof} This is Corollary 10.4 (1) and (3)  in \cite{Kollar2005} and the expression for $k$ follows from Theorem 5.7 in \cite{Kollar2005}. Indeed,  the minimal resolution of $X$  is a K3-surface and $H_2(Y, {\mathbb Z})$  is torsion free of rank at most 22.
\end{proof}
We should note that $H_1(Y,\mathbb{Z})=0$ is essential and for the examples we have in mind it requires that the orbifold fundamental group of $X$ is trivial. So we cannot use the Kummer surface construction, which provides a smooth K3 surfaces after blowing up 16 $A_1$-singularities - in this case $Y$  is not simply-connected. Now we are ready for the main topological characterization of the simply-connected examples we consider in the next sections.
\begin{corollary}\label{cormain}
Let $X$ be a Calabi-Yau orbifold surface (K3 orbifold) with  only isolated $A_1$ singularities  and trivial orbifold fundamental group,  and  $\tilde{X}$ be the blow-up of $X$ at $k\geq 2$ of these points. Assume that there is an ample rational divisor $H$ on $X$, such that the Seifert $S^1$-bundle $Y$ with $c_1(Y/X)=H$ is smooth. Then there exist  two  rational divisors $D_1$ and $D_2$  and a real positive $(1,1)$-class $\omega$ which satisfy (i)-(iv) of Theorem \ref{cororb} for the respective Seifert $S^1$-bundles $\tilde{M_1}$ and $\tilde{M_2}$ over $\tilde{X}$ defined by $D_1$ and $D_2$. The divisors $D_1$ and $D_2$ can be chosen to be independent over $\mathbb{Q}$  and in such a way, that the corresponding $T^2$ - bundle  $\tilde M$
is simply-connected and has no torsion in the cohomology. It is diffeomorphic to $\#_r(S^2\times S^4)\#_{r+1}(S^3\times S^3)$, where $r=rk(H^2(X,\mathbb{Q}) -2$.
\end{corollary}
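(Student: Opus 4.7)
The plan is to execute three steps: construct $D_1, D_2, \omega$ by extending the recipe in the proof of Theorem \ref{cororb}, verify that $\tilde M$ is simply-connected with torsion-free cohomology via Theorem \ref{Kol} combined with Gysin sequences, and identify the diffeomorphism type via Wall's classification of closed simply-connected spin $6$-manifolds with torsion-free homology.

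First, following the proof of Theorem \ref{cororb}, take $\omega = nH - \sum_{i=1}^{k}E_i$ with $n$ rational and large enough that $\omega$ is smooth and positive on $\tilde X$. Setting $m = n(H\cdot H)/2$, the divisor $D_1 := H + m E_1$ is $\omega$-traceless; put $D_2 := E_1 - E_2$, which is $\omega$-traceless as well since $H\cdot E_i = 0$ and $E_i\cdot E_j = -2\delta_{ij}$ yield $D_2\cdot\omega = 2 - 2 = 0$. Clearly $D_1, D_2$ are $\mathbb{Q}$-linearly independent. Properties (i)--(iv) of Theorem \ref{cororb} apply to each $D_i$ separately: for $D_2$, smoothness holds because $E_1, E_2$ are disjoint from the remaining $A_1$-singularities of $\tilde X$, so no local orbifold divisibility obstruction arises. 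Hence both Seifert bundles $\tilde M_1, \tilde M_2 \to \tilde X$ are smooth, and $\tilde M := \tilde M_1\times_{\tilde X}\tilde M_2$ is equivalently a smooth principal $S^1$-bundle over $\tilde M_1$ with Euler class $e = \pi_1^* D_2$.

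Next, since $\pi_1^{orb}(\tilde X) = 1$ by Theorem \ref{cororb}(ii), Theorem \ref{Kol} applied to $\tilde M_1 \to \tilde X$ gives that $\tilde M_1$ is simply-connected with torsion-free cohomology and diffeomorphic to $\#_{k'}(S^2\times S^3)$ for $k' = \dim H^2(\tilde X,\mathbb{Q})-1$. The orbifold Gysin sequence shows that $H^2(\tilde M_1)$ is identified with the quotient $H^2(\tilde X)/\langle D_1\rangle$; in this quotient the image of $D_2 = E_1 - E_2$ is primitive because its $E_2$-coefficient is $-1$. The homotopy exact sequence for $\tilde M\to\tilde M_1$ then gives $\pi_1(\tilde M) = 1$, and the Gysin sequence with primitive Euler class $e$ keeps the integral cohomology torsion-free.

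To identify the diffeomorphism type I would invoke Wall's classification, which requires $w_2 = 0$, $p_1 = 0$ and trivial cubic form on $H^2$. Spin is immediate from $c_1(\tilde M) = \pi^* c_1(\tilde X) = 0$. From $T\tilde M\cong \pi^* T\tilde X\oplus\mathbb{R}^2$ we get $p_1(\tilde M) = -2\pi^* c_2(\tilde X)$, and for any $x\in H^2(\tilde M)$ lifting to $\tilde x\in H^2(\tilde X)$ modulo $\langle D_1, D_2\rangle$ the pairing $\pi^* c_2(\tilde X)\cup x = \pi^*(c_2(\tilde X)\cup\tilde x)$ lies in $\pi^* H^6(\tilde X) = 0$; the same argument kills the triple cup product. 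A Leray--Serre computation using that $D_1, D_2$ are nonzero and $\mathbb{Q}$-independent yields $b_2(\tilde M) = \dim H^2(\tilde X,\mathbb{Q}) - 2$ and $b_3(\tilde M) = 2 b_2(\tilde M) + 2$, so Wall's theorem produces $\tilde M\cong \#_{b_2(\tilde M)}(S^2\times S^4)\#_{b_2(\tilde M)+1}(S^3\times S^3)$. The main obstacle I expect is the Leray--Serre bookkeeping in the orbifold setting and transferring the primitivity argument for $e$ from rational to integral orbifold cohomology.
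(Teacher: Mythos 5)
There is a genuine gap in your construction of $D_2$. The choice $D_2=E_1-E_2$ fails property (iii) of Theorem \ref{cororb} whenever $\tilde X$ still has singular points, which is the generic situation here: only $k$ of the $A_1$-points are blown up, and in the application (Theorem B) the remaining singular points are present for the whole range of examples. By Theorem \ref{localseifertsmooth}, smoothness of a Seifert $S^1$-bundle at an isolated $A_1$-point requires the local uniformizing group $\mathbb{Z}_2$ to \emph{inject} into $S^1$; equivalently, the local invariant of $c_1$ at that point must be nontrivial (in Koll\'ar's data, $b_i$ coprime to $m_i$, so $b_i=1$ when $m_i=2$). Since $E_1,E_2$ are disjoint from the remaining singular points, $D_2=E_1-E_2$ is trivial near each of them, the induced character $\mathbb{Z}_2\to S^1$ is trivial, and $\tilde M_2$ acquires non-manifold points locally of the form $(\mathbb{C}^2/\mathbb{Z}_2)\times S^1$ (the link of the transverse slice is $\mathbb{RP}^3$). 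Your justification is thus exactly backwards: disjointness from the singular set is precisely what destroys smoothness for a divisor with no $H$-component, whereas in the proof of Theorem \ref{cororb} disjointness is harmless only because the summand $H$ already satisfies the local divisibility conditions at every singular point (this is the hypothesis that $Y$ is smooth). The paper takes $D_1=H-mE_1-E_2$ and $D_2=H-E_1-mE_2$, both containing $H$, so both inherit these local conditions; with your $D_2$ the fiber product $\tilde M$ would in fact still be smooth (each $\mathbb{Z}_2$ injects into $T^2$ through the first factor), but the statement asserts (i)--(iv) for \emph{both} $\tilde M_1$ and $\tilde M_2$, and your $D_2$ does not deliver it.

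Two secondary points. First, instead of proving integral primitivity of the Euler class $\pi_1^*D_2$ on $\tilde M_1$ (which, as you yourself flag, is delicate because the identification $H^2(\tilde M_1)\cong H^2(\tilde X)/\langle D_1\rangle$ is only reliable rationally in the orbifold setting), the paper simply divides the pulled-back class by an integer if it is not primitive; this replaces the $S^1$-bundle but does not affect any conclusion, and is the cleaner move. Second, your endgame via Wall's classification of simply-connected spin $6$-manifolds with torsion-free homology is genuinely different from the paper's, which invokes the Goldstein--Lininger classification \cite{GL} of simply-connected $6$-manifolds with \emph{free} $S^1$-actions: the free $S^1$-action is already in hand, so the paper needs only $w_2=0$ and torsion-freeness (obtained by generalizing Proposition 8, Proposition 14 and Corollary 15 of \cite{Kollar2007}), while your route additionally requires computing $p_1$ and the cubic form. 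Your rational vanishing arguments for those invariants are plausible given torsion-freeness, so this part is a viable alternative; the $D_2$ issue above is the real gap and is fixed by adopting the paper's divisors.
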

\begin{proof}
 Choose $D_1=H-mE_1-E_2$ and $D_2 = H - E_1 - mE_2$ and $\omega = nH- E_1-E_2 -\ldots -  E_k$ (as in the proof of Theorem \ref{cororb}). Then $D_1$ and $D_2$ clearly satisfy $i)$ to $iv)$ with this $\omega$ and are independent over $\mathbb Q$. To see that they determine a simply-connected 6-manifold, we consider first the Seifert $S^1$-bundle $\tilde{M_1} \rightarrow \tilde{X}$ corresponding to $c_1(\tilde{M_1}/X)=D_1$. According to Corollary 10 in  \cite{Kollar2007}  $\tilde M_1$ is simply-connected, and by the previous Theorem diffeomorphic to a connected sum of $k$ copies of  $S^2\times S^3$. Now the total space of the $T^2$-bundle  $\tilde M$ can be considered as a smooth $S^1$-bundle $\pi_2: \tilde{M}\rightarrow \tilde{M_1}$. Its characteristic class is the pull-back of $D_2$ to $\tilde M_1$. We can always divide it by an integer, if it is not primitive.
  In this way we will  obtain a simply-connected 6-manifold $\tilde M$.
  To see that it has no torsion in the cohomology
 we can use the proofs in Section 2 of \cite{Kollar2007}.  Although the results there are for bundles over  a base of complex dimension 2, it is easy to generalize the proof for any dimension in the smooth case - see Proposition 14 and Corollary 15 and the remaining parts of the proof of Proposition 8 in \cite{Kollar2007}.
Then $\tilde M$  is a simply-connected 6-manifold with a  free $S^1$-action and vanishing second Stiefel-Whitney class (since $\tilde M$ is a complex manifold and its canonical bundle is trivial). Its cohomology
 can be computed as in the prof of Proposition 8 in \cite{Kollar2007} and in particular have  no torsion. The main result in \cite{GL}  gives the diffeomorphism  type of $\tilde M$  as described. The number $r$ can be computed as $r=rk(H^2(\tilde M_1,\mathbb{Q}))-1 = rk(H^2(X,\mathbb{Q})) -2$ using the exact cohomology sequence for $S^1$-bundles.
\end{proof}
We'll use the  previous results in Section 4 to prove Theorem B.

\section{Proof of Theorem A}
We obtain  the proof of Theorem A  by mixing the results in  Section 2 with the following generalization of the Fu-Yau theorem to Hermitian $3$-folds with a transverse Calabi-Yau structure.

\begin{theorem}\label{main}
Let $(M,I,g)$ be a Hermitian $3$-fold with fundamental form $\omega(\cdot,\cdot)=g(I\cdot, \cdot )$ and equipped with  a closed $(2,0)$-form $\psi_B$
and  a $\partial$-closed $(1,0)$-form $\theta$
 satisfying,
\begin{equation}\label{top}
\|\psi_B\|=\|\theta\|=1\,,\quad d\omega_B=0\,, \quad \omega_B\wedge d\theta = 0\,, \quad  \iota_{\bar Z}d\theta=0\,,\quad   \iota_Z\psi_B=0\,,
\end{equation}
where $\omega_B:=\omega-\frac{i}{2}\theta\wedge\bar\theta$ and $Z$ is the vector field dual to $\theta$ with respect to $g$.
Assume that there exists a Hermitian vector bundle $(V,H)$ over $M$ such that the curvature $F_H$ of its Chern connection satisfies
\begin{equation}\label{f1}
F_H\wedge\omega_B=0\,,\quad F_H^{2,0}=F_H^{0,2}=0\,,
\end{equation}
and
\begin{equation}\label{f2}
\int_M \alpha'{\rm tr}\,(R\wedge R)\wedge \omega-\alpha'{\rm tr}\,(F_H\wedge F_H)\wedge \omega- \|d\theta\|^2\,\omega^3=0\,,
\end{equation}
where $R$ is the Chern connection of $g$ and $\alpha'\in \mathbb R$.
There exists a smooth function $u\colon M\to \R$ such that if $\omega_u={\rm e}^u \omega_B+\frac{i}{2}\theta\wedge \bar \theta$, then $(V,H, M, \omega_u)$ is a solution to Hull-Strominger's system.
 \end{theorem}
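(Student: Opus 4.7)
\emph{Plan.} The strategy is to reduce the Hull--Strominger system to a single scalar equation for a basic function $u$ on $M$ (i.e.\ one annihilated by the vector fields $Z$ and $\bar Z$ dual to $\theta,\bar\theta$), and then solve this equation by the Fu--Yau continuity method adapted to the transverse Calabi--Yau setting, using the basic elliptic theory of El Kacimi \cite{EKA} in place of the usual elliptic theory on the base.

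First, for any basic $u$ the ansatz $\omega_u=e^u\omega_B+\tfrac{i}{2}\theta\wedge\bar\theta$ automatically satisfies \eqref{SS1}, \eqref{SS2} and \eqref{SS4}. The first two are immediate from \eqref{f1}: since $(\theta\wedge\bar\theta)^2=0$, one has $\omega_u^2=e^{2u}\omega_B^2+e^u\omega_B\wedge i\theta\wedge\bar\theta$, and both terms vanish when wedged with $F_H$ using $F_H\wedge\omega_B=0$. For \eqref{SS4}, the identity $\psi:=\psi_B\wedge\theta$ together with $\iota_Z\psi_B=0$ and $\|\psi\|_{\omega_u}^2\omega_u^3=c\,i\psi\wedge\bar\psi$ yields $\|\psi\|_{\omega_u}^2=Ce^{-2u}$ with $C$ a constant. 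Hence
\[
\|\psi\|_{\omega_u}\,\omega_u^2=\sqrt{C}\bigl(e^u\omega_B^2+\omega_B\wedge i\theta\wedge\bar\theta\bigr),
\]
which is closed because $du\wedge\omega_B^2=0$ (the form $du$ is transverse while $\omega_B^2$ is the transverse volume form on a four-real-dimensional distribution), $d\omega_B=0$, and $\omega_B\wedge d\theta=0$ (whence also $\omega_B\wedge d\bar\theta=0$).

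The problem is thus reduced to the anomaly cancellation \eqref{SS3}. Expanding $i\partial\bar\partial\omega_u$ via the ansatz and computing $\tr(R_{\omega_u}\wedge R_{\omega_u})$ in a frame adapted to the foliation, one finds that \eqref{SS3} is equivalent to a transverse scalar equation of Fu--Yau type, schematically
\[
\gD_B e^u+\alpha'\,\sigma_2(i\partial\bar\partial u)=\mu\qquad\text{on the transverse Calabi--Yau structure,}
\]
where $\gD_B$ is the transverse Laplacian, $\sigma_2$ denotes the second elementary symmetric function of the eigenvalues of $i\partial\bar\partial u$ with respect to $\omega_B$, and $\mu$ is a smooth function built from $\|d\theta\|^2$, $\tr(R_B\wedge R_B)$ and $\tr(F_H\wedge F_H)$; the integrability obstruction for this PDE is exactly \eqref{f2}.

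The last step is to solve this transverse complex Monge--Amp\`ere-type equation for a basic $u$ via the continuity method of \cite{FuYau, picard}: one deforms $\mu$ through a family $\mu_t$ and shows that the set of $t$ for which a basic solution exists is nonempty, open and closed. Openness follows from the implicit function theorem, the linearization being a transversely elliptic operator on basic functions invertible modulo the one-dimensional kernel of constants by El Kacimi's basic Hodge theory \cite{EKA}. The main obstacle is closedness, which requires $C^0$, $C^1$, $C^2$ and higher basic a priori bounds. These are precisely the delicate Fu--Yau--Picard estimates; since they are transverse-local in character and use only the transverse K\"ahler form $\omega_B$, the transverse Calabi--Yau identity $i\psi_B\wedge\bar\psi_B\propto\omega_B^2$, and the maximum principle for basic functions, they carry over to our setting once one checks that every auxiliary function constructed in \cite{FuYau, picard} is basic and that the integration-by-parts arguments respect the foliation. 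Once the estimates are in place, El Kacimi's basic Schauder regularity bootstraps to a smooth basic solution $u$, and the corresponding $(V,H,M,\omega_u)$ solves the Hull--Strominger system.
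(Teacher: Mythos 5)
Your skeleton coincides with the paper's: the ansatz $\omega_u={\rm e}^u\omega_B+\chi$ disposes of \eqref{SS1}, \eqref{SS2}, \eqref{SS4} exactly as you say (your computation of the conformal factor $\|\psi\|_{\omega_u}\propto {\rm e}^{-u}$ and of $d(\|\psi\|_{\omega_u}\omega_u^2)=0$ via $du\wedge\omega_B^2=0$ is correct), the anomaly equation \eqref{SS3} reduces to a Fu--Yau-type scalar equation for a basic $u$ with \eqref{f2} as integrability condition, and openness in the continuity method is handled through El Kacimi's transverse elliptic theory. Where you genuinely diverge is the closedness step, and here the paper's route is both different and decisively lighter than yours. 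You propose to re-derive the Fu--Yau/Phong--Picard--Zhang a priori estimates in the transverse category, ``checking that every auxiliary function is basic and that integration by parts respects the foliation''---a substantial unverified program, since those estimates fill long papers and the transverse geometry is only an orbifold. The paper sidesteps this entirely: a basic solution of the transverse equation $(*_t')$ also solves the honest scalar equation $(*_t)$ on the compact $3$-fold $M$ obtained by wedging with $\omega$, which belongs to the class already treated in \cite{FuYau,picard}; the known estimates therefore apply verbatim on $M$, and the set ${\bf T}_B$ of parameters admitting \emph{basic} solutions is closed for the trivial reason that $C^{5,\beta}_B(M)$ is a closed subspace of $C^{5,\beta}(M)$. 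Basicness then only needs to be propagated through openness, and there the paper is more careful than your sketch: the linearization $L$ is not self-adjoint, so El Kacimi's decomposition $C^{3,\beta}_B=\ker(F^*)\oplus{\rm Im}(F)$ does not by itself give ``invertibility modulo constants''; instead one identifies $\ker\tilde F^*$ with the kernel of the Chern Laplacian of $\omega_{u_0,t_0}$ (constants), observes that this Laplacian preserves basic functions to get ${\rm Im}(L)={\rm Im}(\tilde L)\cap C^{3,\beta}_B$, and inherits surjectivity of $L$ from the bijectivity of the full-space linearization $\tilde L$ proved in \cite{FuYau,picard}. Finally, you gloss the curvature computation as ``schematic'', but the precise statement of Lemma \ref{RuRu}---that ${\rm tr}(R_u\wedge R_u)-{\rm tr}(R_B\wedge R_B)=2\partial\bar\partial u\wedge\partial\bar\partial u+2i\partial\bar\partial({\rm e}^{-u}\rho)$ with $\rho$ a basic $(1,1)$-form independent of $u$---is exactly what places the reduced equation in the Fu--Yau/PPZ class, so in a complete write-up this computation cannot be omitted.
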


We show Theorem \ref{main} by using some results in \cite{FuYau,picard}. Namely, it can be observed that the argument in \cite{FuYau} to reduce the Hull-Strominger  system on a principal  $T^2$-bundle over a K3 surface to an elliptic equation, depends only on the foliated structure of the manifold and the assumptions in  Theorem \ref{main} allow to reduce the Strominger system on $M$ to a transversally elliptic equation. The solvability of the equation follows by \cite{FuYau,picard} taking into account a result  of El Kacimi in \cite{EKA}.

\medskip Now we focus on the setting of Theorem \ref{main}. First, we'll construct a $(3,0)$-form $\psi$  and show that the equations \eqref{SS1}, \eqref{SS2} and \eqref{SS4} are satisfies for any basic function $u$.  Let $Z$ be the dual vector field of $\theta$ and let $\mathcal F$ be the foliation generated by the real and the imaginary part of $Z$. Denote by $L$ the vector bundle of tangent vectors to $M$ orthogonal to $\mathcal F$. The metric $g$ restricts to a Hermitian metric $g_B$ along the fibers of $L$. Note that $\omega_B$ is the fundamental form of $g_B$ since if  $g(W,\bar Z)=0$, then $\omega(W,\bar W)=\omega_B(W,\bar W)$. In particular $\omega_B$ is non-degenerate in the transverse directions with respect to $\mathcal F$.

The pair $(\omega_B,\psi_B)$ induces a transverse Calabi-Yau structure on $M$. From the fact that $M$ is Hermitian 3-fold and  \eqref{top} it follows that  $\omega_B$ and $\psi_B$ are both basic forms, i.e. they both satisfy the conditions
 $$
\iota_X \omega_B =0\,,\quad    \mathcal L_X \omega_B=0\,,\quad   \, \iota_X \psi_B =0\,,\quad  \mathcal L_X \psi_B=0\,,
$$
for every vector field $X$ tangent to the foliation.  We denote by $\Omega_B^r(M)$ the space of complex basic $r$-forms on $M$ and by $*_B\colon \Omega_B^r(M)\to \Omega_B^{4-r}(M)$ the {\em basic Hodge    \lq\lq star\rq\rq \, operator}  which is defined by the usual relation
$$
\eta \wedge *_B \beta=g_B(\eta,\beta)\, \frac{\omega_B^2}{2}\,,
$$
where $g_B$ is extended in $\Omega_B^r(M)$ in the standard way.
From our assumptions it follows that
$d\theta$ is a basic form and
 $$*_Bd\theta=-d\theta.$$

Notice that if  $\psi=\psi_B\wedge \theta$, then the pair $(\omega, \psi)$ gives an ${\rm SU}(3)$-structure on $M$. Furthermore from our assumption $d\theta\wedge \omega_B=0$ we infer
$$
d\omega^2=d\left(\omega_B^2+i\omega_B\wedge\theta\wedge\bar \theta  \right)=i\omega_B\wedge d\theta\wedge \bar \theta-i\omega_B\wedge\theta \wedge d \bar \theta=0\,,
$$
i.e., $\omega$  defines a  balanced  metric.

In order to simplify the notations we put
$$
\chi=\frac{i}{2}\theta\wedge \bar \theta
$$
and following the  Fu-Yau approach, we deform $\omega$ as
$$
\omega_u={\rm e}^u\,\omega_B+\chi
$$
where $u$ is a basic function on $M$ (i.e. $X(u)=0$ for every vector field  $X$ tangent to the foliation). Let $\psi=\psi_B\wedge \theta$. 

Since $\|\psi\|_{\omega_u}={\rm e}^{-u}$, taking into account that $\omega_B$ is closed and $u$ is basic, we have
$$
d(\|\psi\|_{\omega_u}\,\omega_u^2)=d({\rm e}^{-u}\,\left({\rm e}^{2u}\omega_B^2+i{\rm e}^{u}\omega_B\wedge\theta\wedge \bar \theta  \right))=d({\rm e}^{u}\omega_B^2+i\omega_B\wedge\theta\wedge \bar \theta)=0\,,
$$
i.e. $(\omega_u,\psi)$ satisfy \eqref{SS4}. Moreover condition \eqref{f1} implies that $(F_H,\omega_u)$ satisfies the equations  \eqref{SS1} and \eqref{SS2}. Then to solve the  Hull-Strominger system we need to solve equation \eqref{SS3} which reduces to the following equation on $u$
\begin{equation}\label{ppo}
i\partial \bar\partial \omega_u=\frac{\alpha'}{4}\,{\rm tr}\left(R_u\wedge R_u-F_H\wedge F_H\right)\,,
\end{equation}
where $R_u$ is the Chern curvature of $\omega_u$. To complete the proof of Theorem \ref{main} it remains to show the existence of solution of \eqref{ppo}.

Taking into account that $*_B\bar\partial \theta=-\bar\partial \theta$, we get
\begin{equation}\label{pbpou}
i\partial \bar\partial \omega_u=\,i\partial \bar\partial {\rm e}^u\wedge\omega_B+\frac{1}{2} \|\partial \bar \theta\|^2\,\frac{\omega_B^2}{2}\,.
\end{equation}
Moreover, we have the following

\begin{lemma}\label{RuRu}
The following formula holds
$$
{\rm tr} (R_u\wedge R_u) ={\rm tr} (R_B\wedge R_B) +2\partial \bar\partial u\wedge \partial \bar\partial u+2  i\partial \bar\partial({\rm e}^{-u}\rho)
$$
where $R_B$ is the Chern curvature of the Hermitian bundle $(L,g_B)$ and  $\rho$ is a real basic $(1,1)$-form which does not depend on $u$.
\end{lemma}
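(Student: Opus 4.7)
The approach is a direct adaptation to the foliated setting of the Fu--Yau calculation of ${\rm tr}(R_u \wedge R_u)$ for metrics of the form $\omega_u = e^u\omega_B + \chi$. The key observation is that, in local holomorphic coordinates adapted to the foliation $\mathcal{F}$, the metric $g_u$ has a block structure that reduces the computation to essentially the one carried out in \cite{FuYau} for the principal torus bundle case.

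Around any point of $M$ I would first pick local holomorphic coordinates $(z^1, z^2, z^3)$ with $\partial/\partial z^3$ spanning the leaf direction, and write $\theta = dz^3 + \alpha_1\, dz^1 + \alpha_2\, dz^2$ locally. The basicness of $\omega_B$ and of $d\theta$ forces the transverse metric components and the coefficients $\alpha_j$ to depend only on $z^1, z^2$ and their conjugates. A block-determinant calculation gives $\det g_u = \tfrac{1}{2}\, e^{2u}\, \det g_B$, so that the Chern--Ricci form of $\omega_u$ differs from that of $\omega_B$ only by a $\partial\bar\partial u$ correction.

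Next I would compute the matrix of Chern connection $1$-forms $\partial g_u \cdot g_u^{-1}$ and then $R_u = \bar\partial(\partial g_u \cdot g_u^{-1})$. Thanks to the block structure, $R_u$ splits into a transverse piece reproducing $R_B$ to leading order, plus correction terms driven by $\partial\bar\partial u$ and by the derivatives of $\theta$, each weighted by powers of $e^{-u}$. Taking the trace of $R_u \wedge R_u$ then yields three qualitatively different contributions: the purely transverse piece ${\rm tr}(R_B \wedge R_B)$; a quadratic piece $2\,\partial\bar\partial u \wedge \partial\bar\partial u$ coming from the square of the $\partial\bar\partial u$ correction to the trace; and mixed cross terms linear in $\partial\bar\partial u$, wedged with basic $2$-forms and multiplied by $e^{-u}$.

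The most delicate step, which I expect to be the main obstacle, is reassembling the mixed cross terms as $2\, i\partial\bar\partial(e^{-u}\rho)$ for a basic real $(1,1)$-form $\rho$ independent of $u$. The main tool is the Leibniz identity
\[
\partial\bar\partial(e^{-u}\rho) \;=\; e^{-u}\bigl(\partial u \wedge \bar\partial u \wedge \rho - \partial\bar\partial u \wedge \rho - \partial u \wedge \bar\partial\rho + \bar\partial u \wedge \partial\rho + \partial\bar\partial\rho\bigr),
\]
which matches term by term against the cross terms of ${\rm tr}(R_u \wedge R_u)$ and thereby identifies $\rho$. The form $\rho$ is built algebraically from $\theta$, $\bar\theta$, $d\theta$ and the transverse Chern curvature, and is automatically basic because each of those ingredients is. This identification is the analogue of the corresponding step in the original Fu--Yau paper, where Ricci-flatness of the K3 base was invoked to cancel some of the terms; in our foliated setting the transverse Calabi--Yau structure encoded by $\psi_B$ plays the same role.
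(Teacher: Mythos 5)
Your outline is essentially the paper's proof: the same adapted holomorphic coordinates with $\partial_{z_3}=Z$, the same block structure of the Gram matrix with basic entries (the paper corrects the coframe $\{dz_1,dz_2,\theta\}$ to the holomorphic coframe $\{dz_1,dz_2,\theta-\alpha\}$, where $\alpha$ is a local \emph{basic} potential with $\bar\partial\alpha=\bar\partial\theta$; in your coordinates $\alpha=\theta-dz^3$), the same three-way split of ${\rm tr}(R_u\wedge R_u)$, and the same use of transverse Ricci-flatness (${\rm tr}\,R_B=0$, forced by $\psi_B$) to kill the term linear in $\partial\bar\partial u$. The one place you genuinely deviate is the step you flag as the main obstacle, and there the paper has a cleaner device than your Leibniz matching: it first proves a $u$-free identity for an \emph{arbitrary} invariant transverse metric $\tilde g_B$,
\begin{equation*}
{\rm tr}(\tilde R\wedge \tilde R)={\rm tr}(\tilde R_B\wedge \tilde R_B)+2\,\partial\bar\partial\,{\rm tr}\bigl(\bar\partial A\wedge \partial A^*\cdot G_B^{-1}\bigr),
\end{equation*}
and then simply substitutes $\tilde g_B={\rm e}^u g_B$: the transverse curvature shifts to $R_B+\partial\bar\partial u\,{\rm I}$ (whence $2\,\partial\bar\partial u\wedge\partial\bar\partial u$ from the rank-two identity block, and the cross term $2\,\partial\bar\partial u\wedge{\rm tr}\,R_B=0$), while $\rho_{\tilde g_B}=-i\,{\rm tr}(\bar\partial A\wedge\partial A^*\cdot G_B^{-1})$ scales as $\rho_{{\rm e}^u g_B}={\rm e}^{-u}\rho_{g_B}$ because $A$ is metric-independent and $G_B^{-1}$ scales by ${\rm e}^{-u}$. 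So the entire $u$-dependence of the transgression term, including the ${\rm e}^{-u}$ weight, comes for free from conformal scaling, and no term-by-term reconstruction of $\rho$ is needed (your matching would land on the same $\rho$, just more laboriously). Two corrections to your description of $\rho$: it does \emph{not} involve the transverse Chern curvature --- it is quadratic in the components of $d\theta$ and $\overline{d\theta}$ contracted with the inverse transverse metric; and its existence as a globally defined basic form is not automatic from basicness of the ingredients, since $A$ is only a local potential: one must check invariance under $\alpha\mapsto\alpha+\beta$ with $\bar\partial\beta=0$ (the entries of $\beta$ are then holomorphic, those of $B^*$ anti-holomorphic, so $\bar\partial B=0$ and $\partial B^*=0$ and the expression is unchanged), a point the paper disposes of as a standard coordinate computation.
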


\begin{proof}
Let $\tilde g_B$ be any Hermitian metric along the fibers of $L$ satisfying $\mathcal L_X  \tilde g_B =0$  and let
$\tilde g=\tilde g_B+\theta\odot\bar \theta$ be the induced metric on $M$. Denote by $\tilde R_B$ and $\tilde R$ the Chern curvature of $\tilde g_B$ and $\tilde g$, respectively. Then we can locally write
$$
\tilde R=\bar\partial (\partial G\cdot G^{-1})
$$
where $G=(\tilde g(Z_i,\bar Z_j))$ and $\{Z_1,Z_2,Z_3\}$ is a local holomorphic frame on $M$.

Let $\{z_1,z_2,z_3\}$ be  complex coordinates on $M$ such that $\partial_{z_3}=Z$. Then
$$
Z_1=\partial_{z_1}-\theta(\partial_{z_1})Z\,,\quad Z_2=\partial_{z_2}-\theta(\partial_{z_2})Z
$$
is a local frame of $L$. In order to compute $\tilde R$, we modified the frame $\{Z_1,Z_2,Z\}$ to a holomorphic  frame. The dual frame to $\{Z_1,Z_2,Z\}$ is given by $\{dz_1,dz_2,\theta\}$ and since $\bar\partial\theta$ is basic and $\bar\partial$-closed,
it can be locally written as $\bar\partial \alpha$, where
$\alpha$ is a basic form.
Hence  $\{dz_1,dz_2,\theta-\alpha\}$ is a local frame of holomorphic forms. The dual frame is
$\{Z_1-\alpha_1Z,Z_2-\alpha_2Z,Z\}$, where $\alpha_k=\alpha(\partial_{z_k})$. With respect to this last frame the matrix $G$ takes the following expression
\begin{equation}\label{dieci}
G=\left(\begin{array}{ccc} g_{1\bar 1}+|\alpha_1|^2 &
g_{1\bar2}+\alpha_1\overline \alpha_2& \alpha_1\\
g_{2\bar1}+\alpha_2\overline \alpha_1 & g_{2\bar2}+|\alpha_2|^2 & \alpha_2\\
\overline\alpha_1& \overline \alpha_2 &
1\end{array}\right)
\end{equation}
where $g_{i\bar j}=\tilde g(Z_i,\bar Z_j)$. Accordingly  we can write \eqref{dieci} as
$$
G=\left(
\begin{array}{cc}
G_B+A\cdot A^* & A\\
A^* & 1 \end{array}
\right),
$$
where $G_B=(g_{i\bar j})$, $i,j=1,2$, and $A=(\alpha_1,\alpha_2)^t$. Now
$$
\tilde R=\bar\partial(\partial G\cdot G^{-1})=(\bar\partial\partial G)\cdot G^{-1}-\partial G\wedge \bar \partial G^{-1}
=(\bar\partial\partial G)\cdot G^{-1}+\partial G\wedge\left( G^{-1}\cdot  \bar \partial G\cdot G^{-1}\right)
$$
and we write
$$
\tilde R=\left(
\begin{array}{cc}
R_{1\bar 1} & R_{1\bar 2}\\
R_{2\bar 1} & R_{2\bar 2} \end{array}
\right)
$$
where $R_{1\bar 1}$ is a $2\times 2$ matrix of $2$-forms  and $R_{2\bar 2}$ is a $2$-form.
At a fixed point $p$ we may assume $A(p)=0$ and change the coordinates in order that the matrix $G_B$ (and the matrix $G$) are the identity at $p$.
Therefore
$$
(\bar\partial\partial G)\cdot G^{-1}=
\left(
\begin{array}{cc}
\bar\partial\partial(G_B+A\cdot A^*) & \bar\partial\partial A\\
\bar\partial\partial A^* & 0 \end{array}
\right), \mbox{ at }p.
$$
Moreover
we have
$$
\begin{aligned}
\partial G\wedge\left( G^{-1}\cdot  \bar \partial G\cdot G^{-1}\right) &=
\left(
\begin{array}{cc}
\partial G_B & \partial A\\
\partial A^* & 0\end{array}
\right)\wedge
\left(
\begin{array}{cc}
\bar\partial G_B & \bar\partial A\\
\bar \partial A^* & 0\end{array}
\right)
\\
&=
\left(
\begin{array}{cc}
\partial G_B\wedge \bar\partial \,G_B +\partial A\wedge\bar \partial A^* &\partial G_B\wedge \bar\partial A \\
\partial A^*\wedge \bar\partial G_B  & \partial A^*\wedge \bar\partial A\end{array}
\right)
\end{aligned}
$$
at $p$.
It follows that
$$
\begin{aligned}
R_{1\bar 1}=& \bar\partial\partial(G_B+A\cdot A^*)+\partial G_B\wedge \bar\partial G_B+\partial A\wedge\bar \partial A^*\\
=&\tilde R_B+\bar \partial A\wedge \partial A^*
\end{aligned}
$$
at $p$, and
$$
\begin{aligned}
R_{1\bar 2}&=\bar\partial \partial A+\partial G_B\wedge \bar\partial A\\
R_{2\bar 1}&=\bar\partial \partial A^*+\partial A^*\wedge \bar\partial G_B=\bar\partial(\partial A^*\cdot G_B^{-1})  \\
R_{2\bar 2}&=\partial A^*\wedge \bar\partial A\,,
\end{aligned}
$$
at $p$.
Moreover
$$
\tilde R\wedge \tilde R=\left(
\begin{array}{cc}
R_{1\bar 1}\wedge R_{1\bar 1}+R_{1\bar 2}\wedge R_{ 2\bar 1} & *\\
* & R_{2\bar 2}\wedge R_{2\bar 2}+R_{2\bar 1}\wedge R_{1\bar 2}\end{array}
\right)
$$
and
$$
{\rm tr} (\tilde R\wedge \tilde R) ={\rm tr}(R_{1\bar 1}\wedge R_{1\bar 1}+R_{1\bar 2}\wedge R_{ 2\bar 1})+R_{2\bar 2}\wedge R_{2\bar 2}+R_{2\bar 1}\wedge R_{1\bar 2}\,.
$$
We have
$$
{\rm tr}\,(R_{1\bar 1}\wedge R_{1\bar 1})=
{\rm tr}\left(\tilde R_B\wedge \tilde R_B+2\tilde R_B\wedge\bar \partial A\wedge \partial A^*+\bar \partial A\wedge \partial A^*\wedge \bar \partial A\wedge \partial A^*\right)
$$
at $p$ and a direct computation yields
$$
{\rm tr}\,(\bar \partial A\wedge \partial A^*\wedge \bar \partial A\wedge \partial A^*)=-R_{2\bar 2}\wedge R_{2\bar 2}
$$
at $p$, and so
$$
\begin{aligned}
{\rm tr}(\tilde R\wedge \tilde R)=&\,{\rm tr}\big(\tilde R_B\wedge \tilde R_B+2\tilde R_B\wedge\bar \partial A\wedge \partial A^*\\
&\,+2\,\bar\partial \partial A\wedge \bar\partial(\partial A^*\cdot G_B^{-1})+2\,\partial G_B\wedge \bar\partial A\wedge \bar\partial(\partial A^*\cdot G_B^{-1})\big)
\end{aligned}
$$
at $p$.
We further have
$$
\begin{aligned}
\partial\bar\partial(\bar\partial A\wedge \partial A^*\cdot G_B^{-1}) =\,&\bar\partial\partial A\wedge \bar\partial(\partial A^*\cdot G_B^{-1})+\bar\partial A\wedge \partial\bar\partial(\partial A^*\cdot G_B^{-1})\\
\end{aligned}
$$
and
$$
\begin{aligned}
\bar\partial A\wedge \partial\bar\partial(\partial A^*\cdot G_B^{-1})=\,& \bar\partial A\wedge \bar\partial(\partial A^*\wedge \partial G_B^{-1})=
-\bar\partial A\wedge \bar\partial(\partial A^*\wedge G_B^{-1}\cdot \partial G_B\cdot G_B^{-1})\\
=\,&-\bar\partial A\wedge \bar\partial(\partial A^*\cdot G_B^{-1})\wedge  \partial G_B+\bar\partial A\wedge \partial A^*\wedge  \bar\partial(\partial G_B\cdot G_B^{-1})
\end{aligned}
$$
at $p$,
which together imply
$$
\begin{aligned}
&{\rm tr}(\partial\bar\partial(\bar\partial A\wedge \partial A^*\cdot G_B^{-1}))=\\
&={\rm tr}\left( \tilde R_B\wedge \bar \partial A\wedge \partial A^*+\bar\partial \partial A\wedge \bar\partial(\partial A^*\cdot G_B^{-1})+\partial G_B\wedge\bar\partial A\wedge \bar\partial(\partial A^*\cdot G^{-1}_B)\right)\,.
\end{aligned}
$$
at $p$.
Therefore
$$
{\rm tr}(\tilde R\wedge \tilde R)={\rm tr} \left(\tilde R_{B}\wedge \tilde R_B+2  \partial\bar\partial(\bar\partial A\wedge \partial A^*\cdot G_B^{-1})\right)\,,\mbox{ at }p\,.
$$
This last expression is in fact local since $\rho_{\tilde g_B}=-i{\rm tr}(\bar\partial A\wedge \partial A^*\cdot G_B^{-1})$ is a transverse form on $M$ (this is a standard computation in coordinates).

The  last formula can be in particular applied to $$g_u:={\rm e}^u\,g_B+\tfrac{i}{2}\,\theta\wedge\bar\theta.$$ Since
$R_u=\partial\bar\partial u\,{\rm I}+R,$ the Chern curvature of ${\rm e}^u g_B$ is $\partial\bar\partial u\,{\rm I}+R_B$ and $\rho_{{\rm e}^ug_B}={\rm e}^{-u}\rho_{g_B}$, we get
$$
{\rm tr} (R_u\wedge  R_u)={\rm tr}( R_B\wedge R_B +2\partial\bar\partial u\,{\rm I}\wedge R_B +2\partial \bar \partial ({\rm e}^{-u}\,\rho_{g_B}))+2\partial\bar\partial  u\wedge \partial\bar\partial u\,.
$$

Finally we observe that our assumptions imply that the pair $(\omega_B,\psi_B)$ induces a transverse Calabi-Yau structure on $M$. Consequently ${\rm tr}R_{B}=0$ (see e.g. \cite{HV}) and we obtain ${\rm tr}\,\partial\bar\partial u\,{\rm I}\wedge R_B=0$ and the claim follows by setting $\rho=\rho_{g_B}$.
\end{proof}

\begin{proof}[Proof of Theorem $\ref{main}$] Equation \eqref{pbpou} and Lemma \ref{RuRu} imply that \eqref{ppo} can be written as
\begin{equation}\label{equation}
\begin{aligned}
i\partial \bar\partial {\rm e}^u\wedge\omega_B- \frac{\alpha'}{2} i\partial \bar\partial ({\rm e}^{-u}\rho)-\frac{\alpha'}{2}\partial\bar\partial u\wedge \partial\bar\partial u
+\mu' \frac{\omega_B^2}{2}=0
\end{aligned}
\end{equation}
where $u$ is an unknown basic function and $\rho$ and $\mu'$ are  a given  basic real $(1,1)$-form and a given function, respectively. Taking into account Lemma \ref{RuRu}, equation \eqref{f2} implies that the integral of $\mu'$ is zero.

The solvability of  equation \eqref{equation} can be shown by adapting the proof in \cite{FuYau,picard} to the foliated case. Following the approach in \cite{FuYau}, for $t\in[0,1]$ and $u\in C^{2}_B(M)$ we set
$$
\omega_{u,t}:={\rm e}^{u}\omega_{B}+t\alpha {\rm e}^{-u}\rho+2\alpha i\partial \bar\partial u
$$
and
\begin{equation*}
\tag{ $*_t$}
i\partial \bar\partial {\rm e}^u\wedge\omega_B-it \frac{\alpha'}{2} i\partial \bar\partial ({\rm e}^{-u}\rho)-\frac{\alpha'}{2}\partial\bar\partial u\wedge \partial\bar\partial u
+t\mu' \frac{\omega_B^2}{2}=0\,.
\end{equation*}

Moreover, for $A>0$ and $0<\beta<1$ let
$$
\Upsilon_A=\{u\in C^{4,\beta}_B(M)\,\,:\,\, \|{\rm e}^{-4u}\|_B=2A^4  \}\,,
$$
where $(\cdot,\cdot)_B$ is the basic scalar product defined for $\eta,\gamma\in \Omega^r_B(M)$ as
$$
(\eta,\gamma)_B=\int_M\eta\wedge*_B\gamma\wedge \chi\,,
$$
and $C^{r,\beta}_B(M)$ is the space of H\"older functions on $M$ which are also basic. Then for $t\in[0,1]$ we consider
$$
\Upsilon_{A,t}=\{u\in \Upsilon_A\,\,:\,\, \omega_{u,t}+\chi>0  \}
$$
and
$$
{\bf T}=\left\{t\in [0,1]\,\,:\,\, \mbox{ there exists }u\in \Upsilon_{A,t} \mbox{ solving }(*_t) \right\}\,.
$$
The proof consists in showing that ${\bf T}$ is open and closed in $[0,1]$.

\medskip
In order to show that ${\bf T}$ is open, let $t_0\in {\bf T}$ and let $u_{t_0}\in \Upsilon_{A,t_0}$.  Then we consider
$$
\Upsilon_{[0,1]}=\{(t,u)\in [0,1] \times \Upsilon_A \,\,:\,\,u\,\,\in \Upsilon_{A,t}\}\,.
$$
and we define the operator $L\colon \Upsilon_{[0,1]}\to C^{2,\beta}_{B,0}(M)$ as follows
$$
L(t,u)=*_B\left(i\partial \bar\partial {\rm e}^u\wedge\omega_B-it\frac{\alpha'}{2} \partial \bar\partial ({\rm e}^{-u}\rho)-\frac{\alpha'}{2}\partial\bar\partial u\wedge \partial\bar\partial u
+t\mu' \frac{\omega_B^2}{2}\right)\,.
$$
We have
$$
dL[t_0,u_0](s,\varphi)=L'(\varphi)
$$
for every $\varphi\in T_{u_{t_0}}\Upsilon_{A}$, where for $\eta\in C^{2}_B(M)$.
$$
L'(\eta)=*_B\left(i\partial \bar \partial({\rm e}^{u_{t_0}}\eta)\wedge\omega_B+it_0\tfrac{\alpha'}{2}\partial \bar \partial({\rm e}^{-u_{t_0}}\eta\rho)-i\alpha'\partial u_{t_0}\wedge\partial \bar\partial \eta\right)\,.
$$

Condition $\omega_{u_0,t_0}+\chi>0$ implies that $L'$ is transversally elliptic.  The same computation as in \cite{FuYau} implies that the formal adjoint of $L'$ with respect to $(\cdot,\cdot)_B$ is the complex  Laplacian of $\omega_{u_0,t_0}+\chi$ restricted to basic functions. Denote this operator by $P$.

We can now show that $L'\colon T_{u_{t_0}}\Upsilon_{A}\to  C^{2,\beta}_{B,0}(M)$ is an isomorphism. If $\eta\in C^{2}_B(M)$ belongs to the kernel of $L'$, then  it belongs to the kernel of $L'+\frac{i}{2}Z\bar Z$ which is a genuine elliptic operator. Hence $\eta$ is constant and, imposing $\eta\in T_{u_{t_0}}\Upsilon_{A}$, then we get $\eta=0$. Therefore $L'$ is injective. In view of El Kacimi theorem \cite{EKA}, a basic function $\psi\in C^{0,\beta}_{B,0}(M)$ is in the image of $L'$ if and only  if it is orthogonal to the kernel of $P$. Again we can use that $P+\tfrac{i}{2}Z\bar Z$ is a genuine elliptic operator and we deduce that  $P\colon  C^{2,\beta}_{B,0}(M)\to C^{0,\beta}_{B,0}(M)$ is injective. The surjectivity of $L'$ then follows.  Therefore the inverse function theorem implies that ${\bf T}$ is open.
The closedness of ${\bf T}$ can be obtained by showing the same a priori estimates as in \cite{FuYau}. Indeed, the same computations as in \cite{FuYau} can be done in our case using transverse holomorphic coordinates and replacing the $L^2$-product with the $(\cdot,\cdot)_B$-product on  basic forms. Since this part is a straightforward adaptation of  \cite[Sections 8--12]{FuYau} we just give a rough description of how things work pointing out the main steps.

Let $(u,t)\in\Upsilon_{[0,1]}$ and let $P$ be the complex Laplacian operator of $\omega_{u,t}$ restricted to basic functions. By integrating
$$
P({\rm e}^{-ku})\,\omega_{u,t}\wedge \chi
$$
on $M$ for $k\in\mathbb N$, using equation $(*_t)$ and following the computations in \cite[Section 12]{FuYau}, we deduce
$$
k\int_M {\rm e}^{-(k-1)u}|\nabla u|^2{\rm Vol'}\leq C\int_M {\rm e}^{-(k+1)u}{\rm Vol'}+
C\int_M {\rm e}^{-ku}{\rm Vol'}
$$
where ${\rm Vol'}=\tfrac12 \omega_{u,t}^2\wedge \chi$ and $C$ is a positive constant depending on $\alpha',\rho,\omega_B,\mu$, only. By replacing $k$ with $k+1$
and using Sobolev inequality it follows
$$
\|{\rm e}^{-ku/2}\|_{L^4}\leq C\left(\|{\rm e}^{-ku/2}\|_{L^2}+\|\nabla {\rm e}^{-ku/2}\|_{L^2}\right)\,,
$$
where $L^p$-norms are computed with respect to ${\rm Vol}'$ 
Using H\"older inequality and integrating by parts from this last inequality it follows a $C^0$ a priori estimate on $u$ for a suitable choice of $A$ \cite[Proposition 21]{FuYau}.
By using the equation it follows an upper bound for $|\nabla u|$ \cite[First part of section 9]{FuYau}. The next step consists in proving an upper bounds of
$$
F=\frac{\omega_{u,t}^2}{\omega_B^2}\,.
$$
Notice that $F$ is a basic function since both $\omega_{u,t}$ and  $\omega_B$ are basic forms. Accordingly to \cite[Proposition 22]{FuYau} the bound on $F$ is obtained by applying the maximum principle to the function
$$
G_1:=1-\alpha'{\rm e}^{-u}|\nabla u|^2+\alpha'{\rm e}^{-\epsilon u}-2\alpha'{\rm e}^{-\epsilon \inf u}
$$
for a suitable choice of $\epsilon$.  Following \cite{FuYau} it is enough to prove the higher order estimates in the special case $\rho=-f\omega_B$ for a positive basic function $f$ and then deduce the general case by observing that the term ${\rm e}^u$ can always control terms such as ${\rm e}^{-u}|\rho|$.
The estimate on the second derivatives of $u$ is then obtained by applying the maximum principle to
$$
G_2={\rm e}^{-\lambda_1u+\lambda_2|\nabla u|^2}\left({\rm e}^u+f{\rm e}^{-u}+\frac{\alpha'}{2}\Delta_B u\right)
$$
for a suitable choice of positive constants $\lambda_1$ and $\lambda_2$ \cite[Proposition 23]{FuYau}. Here $\Delta_B$ denotes the basic Laplacian operator with respect to the metric induced by $\omega_B$. About the third order estimates on $u$ let
$$
G_3=
\left(\lambda_3+\frac{\alpha'}{2}\Delta_B
u\right)\Theta+\lambda_4\left(m+\frac{\alpha'}{2}\Delta_B
u\right)\Gamma+\lambda_5|\nabla
u|^2\Gamma+\lambda_6\Gamma,
$$
where $\lambda_3, \lambda_4,\lambda_5,\lambda_6,m$  are positive constants, $m$ satisfies
$$
m+\frac{\alpha'}{2}\Delta_B u>0
$$
and
\begin{eqnarray*}
&&\Gamma=g^{i\bar j}g^{k\bar l}u_{,ik}u_{,\bar j\bar l}\\
&&\Theta=g'^{i\bar r}g'^{s\bar j}g'^{k\bar t}u_{,i\bar jk}u_{,\bar
r s\bar
t}\\
&&\Xi:=g'^{i\bar j}g'^{k\bar l}g'^{p\bar q}u_{,ikp}u_{,\bar j\bar l
\bar  q}\,,\\
&&\Phi:=g'^{i\bar j}g'^{k\bar l}g'^{p\bar q}g'^{r\bar s}u_{,i\bar l
pr}u_{,\bar jk\bar q
\bar  s}\,,\\
&&\Psi=g'^{i\bar j}g'^{k\bar l}g'^{p\bar q}g'^{r\bar s}u_{,i\bar l
p\bar s}u_{,\bar jk\bar qr}\,.
\end{eqnarray*}
Here we are using the following notation: $\{z^1,z^2\}$ are local complex transverse coordinates, $\omega_B=g_{i\bar j}dz^i\wedge d\bar z^j$, $\omega_{u,t}=g'_{i\bar j}dz^i\wedge d\bar z^j$ and indices preceded by a comma indicate covariant differentiation with respect to the transverse Levi-Civita connection of the metric induced by $\omega_B$. Following \cite[Section 11]{FuYau} the computation of $P(G_3)$ at a maximum point $q$ of $G_3$
leads to the following inequality
$$
0\geq P(G_3) \geq\frac{m_2}{4}\Theta^2+m_2\Theta\Gamma+
m_1\Gamma^2+m_3\Xi-C_4\Theta-C_4\Gamma-C_4 \quad\mbox{ at }q
$$
for a suitable choice of the constants $\lambda_i$ and $m$ which implies a uniform $C^3$ estimate on $u$.
As in the proof of Calabi-Yau theorem, the $C^3$ estimate  is enough to deduce higher order estimates via bootstrapping argument. This implies the closure of ${\bf T}$, as required.

\end{proof}

\begin{oss}
Equation \eqref{equation} is equivalent to the following system on $M$
$$
\begin{cases}
i\partial \bar\partial {\rm e}^u\wedge\omega^2-\alpha i\partial \bar\partial ({\rm e}^{-u}\rho)\wedge \omega-\alpha\partial\bar\partial u\wedge \partial\bar\partial u\wedge\omega
+\mu \frac{\omega^3}{6}=0\,,\\\
Z(u)=0\,,
\end{cases}
$$
where the unknown function $u$ belongs to $C^{\infty}(M,\R)$ and $\alpha$ and $\mu$ depend on $\alpha'$ and $\mu'$ in a universal way. The first equation is analogue to the equation considered in \cite{FuYau,{picard}}, but in our case $\omega$ is not K\"ahler. Hence a generalization of the main theorem in \cite{picard} to the  general non-K\"ahler setting should lead to a direct proof of the solvability of \eqref{equation} without
adapting Fu-Yau proof to the foliated case. 
\end{oss}

\medskip
Now we are ready to prove Theorem A.

\begin{proof}[Proof of Theorem A] Let $(X,\omega_X)$, $M$, $\omega_1$, $\omega_2$ and $E$ as in the statement of Theorem A. Then Theorem \ref{toric} implies that $M$ has a complex structure which makes $\pi$ holomorphic and
$\pi^*(\omega_1+i\omega_2)=\bar\partial \theta$, where $\theta=\theta_1+i\theta_2$ and $\theta_1$ and $\theta_2$ are the connections $1$-forms.
By setting $\omega_B=\pi^*(\omega_X)$ and $\psi_B=\pi^*(\psi_X)$,  where $\psi_X$ is a non-vanishing holomorphic $(2,0)$-form on $X$, we have that $(\theta, \omega_B,\psi_B)$ satisfies the assumptions \eqref{top} in the statement  of Theorem \ref{main}. Moreover, since $E$ is a stable vector bundle of degree $0$ over $(X,\omega_X)$, then it has a Hermitian metric $h$ along its fibers such that, if $V=\pi^*(E)$ and $H=\pi^*(h)$, then the curvature $F_H$ of $H$ satisfies \eqref{f1}.
Finally  \eqref{f2} in the assumptions of Theorem \ref{main} can be written as
\begin{multline*}
\int_M \alpha'{\rm tr}\,(R\wedge R)\wedge \omega-\alpha'{\rm tr}\,(F_H\wedge F_H)\wedge \omega- \|d\theta\|^2\,\omega^3 = \\
\frac{i}{2} \int_M \alpha'{\rm tr}\,(R\wedge R)\wedge \theta\wedge\overline{\theta}-\alpha'{\rm tr}\,(F_H\wedge F_H)\wedge \theta\wedge\overline{\theta}- \|d\theta\|^2\,\omega_X^2\wedge\theta\wedge\overline{\theta}  =\\
\frac{i}{2}\int_M (\alpha'{\rm tr}\,(R\wedge R)-\alpha'{\rm tr}\,(F_H\wedge F_H)- (\|\omega_1\|^2+\|\omega_2\|)\,\omega_X^2)\wedge\theta\wedge\overline{\theta}  = 0\,
\end{multline*}
which is equivalent by the Fubini's Theorem to \eqref{onE} in Theorem A  since the first two terms are the characteristic classes on the base orbifold by the orbifold Chern-Weil theory.
Hence all the assumptions in Theorem \ref{main} are satisfied and the claim follows.
\end{proof}

\begin{oss}  In the examples we consider in the next Section, an explicit and simple formula for $e(S)$ is given in \cite[Sect. 7.3,  p. 115]{IF}. For a K3 orbifold $S$  with $k$ isolated  $A_1$-singularities,  the formula is $e(S)= 24 - k$.  Note that  every K3 orbifold admits a K\"ahler Ricci flat metric, see e.g. \cite{Faulk}.

\end{oss}

\section{Proof of Theorem  B}

Recall from Section 2  that  in general for  a   principal $T^2$-bundle  $\pi: M\rightarrow X$ over  a K\"ahler  manifold $(X, \omega)$, with connection forms $\theta_1, \theta_2$,  the natural Hermitian structure on $M$ has a fundamental form given by  {$ F = \theta_1\wedge \theta_2+ \pi^*(\omega_X)$.

Also for the Fu-Yau construction we need to consider $\omega = \theta_1\wedge \theta_2 + \pi^*(e^f \omega_X )$.
 In particular $\omega$ is conformally balanced, if the curvature forms are primitive with respect to $\omega_X$. Since all of the calculations are local, the same is valid if we replace $(X,\omega_B)$ by a K\"ahler orbifold, so $M$ becomes a $T^2$-bundle in the terminology of \cite{HS} (see Section \ref{sect2}).

For the examples below we also need $X$ to be a K3 orbifold  with only cyclic singularities, as in the Reid's  \cite{Reid} and Iano-Fletcher's lists \cite{IF}. We want  to find Seifert $S^1$-bundles $Y$ over $X$ with smooth total space and primitive characteristic class $c_1(Y/X)$.

\begin{lemma}
All examples in Reid's and Iano-Fletcher's lists have trivial orbifold fundamental group  and admit an ample rational divisor.
\end{lemma}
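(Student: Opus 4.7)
The plan is to exploit the explicit description of each item on the Reid and Iano-Fletcher lists as a well-formed quasi-smooth weighted complete intersection $X = \{f_1 = \cdots = f_c = 0\} \subset \mathbb{P}(w_0,\ldots,w_n)$ of complex dimension $2$, whose defining degrees satisfy the Calabi-Yau balance $\sum_j d_j = \sum_i w_i$ (so $K_X = 0$). Both conclusions follow from this presentation by standard arguments.

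\emph{Ample rational divisor.} Well-formedness of the ambient weighted projective space guarantees that $\mathcal{O}_{\mathbb{P}(w)}(1)$ is an ample $\mathbb{Q}$-Cartier divisor; hence its restriction $H := \mathcal{O}_X(1)$ is an ample orbifold $\mathbb{Q}$-divisor on $X$, and a sufficiently divisible multiple $NH$ is very ample in the classical sense. So the existence of the ample rational divisor is immediate.

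\emph{Triviality of $\pi_1^{orb}(X)$.} Let $C(X) \subset \mathbb{C}^{n+1}$ denote the affine cone cut out by the weighted-homogeneous equations defining $X$. Quasi-smoothness is exactly the condition that $C(X) \setminus \{0\}$ is smooth, so the link $Y := C(X) \cap S^{2n+1}$ is a compact smooth real manifold on which $S^1$ acts with at worst finite stabilizers via the weighted $\mathbb{C}^*$-action, and the quotient map is a Seifert $S^1$-bundle $\pi \colon Y \to X$ with $c_1(Y/X) = H$. The orbifold homotopy long exact sequence of this Seifert fibration
\begin{equation*}
\pi_1(S^1) \longrightarrow \pi_1(Y) \longrightarrow \pi_1^{orb}(X) \longrightarrow 1
\end{equation*}
then shows that $\pi_1(Y)$ surjects onto $\pi_1^{orb}(X)$, so it suffices to prove $\pi_1(Y) = 1$.

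\emph{The main step.} The vanishing $\pi_1(Y) = 1$ is really where the work is, and it is the content of the Lefschetz-type connectivity theorem for links of isolated complete-intersection singularities of Milnor (hypersurfaces) and Hamm (general codimension): the link of such a singularity of codimension $c$ in $\mathbb{C}^{n+1}$ is $(n-c-1)$-connected. In our situation $n - c = \dim_{\mathbb{C}} X = 2$, so $Y$ is $1$-connected, whence $\pi_1(Y) = 1$. For each specific entry on the lists the same simple-connectedness is also recorded explicitly in the Boyer-Galicki-Kollár classification of Sasaki-Einstein $5$-manifolds arising as links. Combined with the exact sequence above, this yields $\pi_1^{orb}(X) = 1$ and completes the proof. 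The only genuinely delicate ingredient is this connectivity fact; the ample $\mathbb{Q}$-divisor and the identification of $Y$ as a Seifert $S^1$-bundle over $X$ are immediate from the weighted complete-intersection presentation.
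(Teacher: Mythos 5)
Your proposal is correct, and it is in substance the paper's argument with the black box opened. For the ample rational divisor you do exactly what the paper does: restrict the hyperplane class $\mathcal{O}(1)$ of the weighted projective space to $X$. For the triviality of $\pi_1^{orb}(X)$ the paper simply cites Theorem 4.7.12 of Boyer--Galicki, and the proof of that cited theorem is precisely the chain you reconstruct: pass to the link $Y$ of the quasi-smooth affine cone, observe that the weighted $S^1$-action exhibits $Y \to X$ as a Seifert $S^1$-bundle with rational first Chern class the hyperplane class, invoke the Milnor--Hamm connectivity theorem to get $\pi_1(Y)=1$ (you need Hamm, not just Milnor, since the relevant Iano-Fletcher entries --- e.g.\ the $X_{6,6}\subset \mathbb{P}(2,2,2,3,3)$ example used in the proof of Theorem B --- are codimension-two complete intersections, and you correctly cover both cases), and conclude via the surjection $\pi_1(Y)\twoheadrightarrow \pi_1^{orb}(X)$ from the orbifold homotopy sequence of the Seifert fibration. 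Your indexing checks out: the cone has complex dimension $3$, so the link is $(3-2)=1$-connected. The only thing your write-up buys beyond the paper's one-line citation is transparency --- it makes explicit that quasi-smoothness (smoothness of $C(X)\setminus\{0\}$) is exactly what licenses the connectivity theorem, which the citation hides; there is no mathematical divergence.
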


\begin{proof}
The divisor comes from the embedding in a  weighted projective space and the hyperplane section, the triviality of the orbifold fundamental group is  Theorem 4.7.12 in  \cite{BG}.
\end{proof}


In order to complete the proof of Theorem B, we need also to assure the existence of a stable vector bundle on specific surfaces from that lists. Consider the example under number  14 at  page 143  in the  Iano-Fletcher's  list of codimension two K3 orbifolds.   It is an intersection of two degree 6 hypersurfaces  in ${\mathbb P}(2,2,2,3,3)$. It has 9 isolated  $A_1$-singularities. Then blow up this K3 orbifold  at  $9-k$ points, where $1\leq k \leq 8$ and denote the resulting surface by $X_k$. Since the blow-ups lead to exceptional divisors with negative intersections, they could be used to construct primitive classes. For $X_k$ we have:

\begin{lemma}\label{stable}
Let $X_k$ be the $K3$-orbifold surface obtained by blowing-up $k, 0\leq k \leq 9$ singular points of the general intersection of two hypersurfaces  of degree $6$ in $\mathbb{P}(2,2,2,3,3)$. Then for $k>0$ there exists  on $X_k$ a stable bundle $E$ of rank 2 and with vanishing first Chern class  and  $c_2(E) = c$ for any $c\geq 4+\frac{k}{2}$.
\end{lemma}

\begin{proof}  We follow closely Theorem 5.1.3 from \cite{HuyLehn10}. The existence of such bundle is based on the Serre construction. The main observation is that the construction depends on a choice of a   0-dimensional subscheme  (isolated points which could be chosen different from the singular ones). We denote by $H$ the hyperplane ample divisor on $X_0$ as well as its pull-back to the blow-ups $X_k$.  We first can construct a bundle $E'$ with $det(E') = 2H$ and $c_2(E') = c+1$, and then the bundle $E = E'\otimes\mathcal{O}_{X_k}(-H)$ will be stable with $c_1(E)=0$ and $c_2(E) = c_2(E') -H^2 = c$.

Now we construct $E'$ from the sequence:
$$
0\rightarrow \mathcal{O}_{X_k} \rightarrow E' \rightarrow \mathcal{Q}\otimes\mathcal{I}_Z \rightarrow 0
$$
where the support of $Z$ is a set of points different from the singular ones. The existence of such bundle $E'$ follows in the same way as the existence part of the proof of Theorem 5.1.1 in \cite{HuyLehn10}, since it is based on the local arguments around the points in the support of $Z$.  Then $c_2(E)=l(Z)$, the number of points in $Z$. Take $l_1 = h^0(K_{X_k}\otimes\mathcal{O}(2H)) = h^0(\mathcal{O}(2H))$.

Since $2H$ is big and nef on $X_k$, we have by Kowamata-Viehweg vanishing Theorem (which is valid for the orbifolds $X_k$)  $l_1 = \chi(2H)$ and,  by the Kawasaki-Hirzebruch-Rieamnn-Roch Theorem,  $\chi(2H) = \chi_{orb}(2H)+ \frac{9-k}{2} = 4+\frac{9-k}{2}$. Now if $l>l_1$, the pair $(K_{X_k}\otimes\mathcal{O}(2H), Z)$ has the Cayley-Bacharach property. Hence by the Serre construction there exists an extension as above.

The proof that the bundle $E$ is stable follows precisely the proof in Theorem 5.1.3 from \cite{HuyLehn10}. Following the same notations for $l_2$ we can see that $l_2 = 0$,  since $l_2$ is the dimension of effective divisors of degree $d=0$. This shows that the bundle $E'$ is stable by the same argument.

\end{proof}


Now we can prove Theorem B

\begin{proof}[Proof of Theorem B] Suppose that $X_k$ is the surface from above. If $H$ is the restriction of the hyperplane divisor in ${\mathbb P}(2,2,2,3,3)$ to $X_0$, it is positive and every blown-up point defines a divisor $E$ in the blow-up with $E.E=-2$. In particular we can apply  the Theorem \ref{cororb}  above $9-k$ times. This will provide a smooth $\tilde{M}$ equipped with a smooth non-negative $2-$form $\omega$. Now Theorem \ref{toric} above for $M = S^1\times \tilde{M}$ and $d\theta_1 = 0$ for $\theta_1$ being the $S^1$-volume form will provide a conformally balanced metric and a unitary $(3,0)$-form, and together with the Lemma \ref{stable} above provides a Hermitian-Yang-Mills instanton bundle $E$ which satisfies the condition (5) in Theorem A for a positive $\alpha'$.


   By Theorem \ref{main}  it will admit a solution of the Hull-Strominger system. The diffeomorphism type of $M$ is $S^1\times \sharp_k(S^2\times S^3)$ for appropriate $k$, which follows from Barden's results \cite{Barden} and Theorem \ref{Kol} for simply-connected 5-manifolds with a semi-free $S^1$-action (just as in the well-studied Sasakian case).

  We also need to  determine  the orbifold second Betti number of the surface, in order to  find $k$. The calculation follows from  Theorem 3.2 in \cite{cuadros}. We can see that there is no torsion in $H_2(M, {\mathbb Z})$ by Theorem 3.4 in \cite{cuadros} (which is a theorem of J. Koll\'ar).  Since for the smooth K3 surface we have  $b_2 = 22$, then for the singular one it should be $22-9=13$ and then we have a smooth Seifert $S^1$-bundle $M$ which is diffeomorphic to $\sharp_k(S^2\times S^3)$ for $k$  as required. This provides the existence of solutions of the Hull-Strominger system on $S^1\times\sharp_k(S^2\times S^3)$,  for $13\leq k\leq 22$. For the simply-connected case the proof is the same, but instead of Theorem \ref{Kol}, we use Corollary \ref{cormain}. Since we need two independent divisors $D_1$ and $D_2$ - we need to have at least 2 points blown-up. So  in this case we have solutions on $M = \sharp_r (S^2\times S^4)\sharp_{r+1}(S^3\times S^3)$,  for $14\leq r \leq 22$.


\end{proof}

\begin{oss}
In \cite{Lee}  solutions for the Strominger system, satisfying the anomaly cancelation condition (3), but with a weaker version of (4), are constructed on $\sharp_r (S^2\times S^4)\sharp_{r+1}(S^3\times S^3)$ for every $r>0$. The underlying complex structure does not have holomorphically trivial canonical  bundle, but has vanishing first Chern class and admits a CYT metric. The metric and complex structure are constructed in \cite{GGP}, and \cite{Lee} uses this construction to find solutions for (3). We note that on $(S^2\times S^4)\sharp_2(S^3\times S^3)$, J. Fine and D. Panov \cite{FP} constructed a different complex structure with $\mathbb{C}^*$-action and holomorphically trivial canonical bundle. Although it is unclear whether their example admits a solution to the Hull-Strominger system, Theorem \ref{main} may provide a possible approach.

\end{oss}

\begin{oss}
 Finally we mention for completeness a partial converse of the construction in Section \ref{sect2} for spaces carrying solutions of the Hull-Strominger system with $T^2$ symmetry. Let $M$ be a  compact complex manifold of complex dimension $3$ with trivial canonical bundle admitting a balanced metric  (so either  K\"ahler or non-K\"ahler Calabi-Yau). Suppose that $M$ admits a locally free  $T^2$-action
 preserving the metric and the complex structure and with only finitely many holomorphic orbits having non-trivial isotropy. If the orbits define a Hermitian foliation, i.e.  a transversally holomorphic and Riemannian foliation, then the leaf space is a  compact complex orbifold surface with an induced Hermitian metric
 \cite{molino, Sund}.

  If additionally we assume that there are only finitely many leaves,  then we have an orbifold $T^2$-bundle $\pi:M\rightarrow S$ where $S$ has only isolated cyclic singularities, since the local holonomy groups are finite and abelian. Indeed, by \cite{Epstein} any leaf of a  compact foliation has finite holonomy group if and only the leaf space  is Hausdorff and for a transversally Riemannian  foliation  the leaf  space  is Hausdorff.

   Moreover, by \cite[Prop. 3.7, page 94]{molino} $S$ has a Hermitian metric and $\pi$ is a Hermitian submersion. If $Z$ is a holomorphic (1,0)-vector field induced by the locally free  $T^2$-action, then for the holomorphic (3,0)-form $\psi$ on $M$,  $i_Z\psi$ induces a non-vanishing (2,0)-form on $S$, holomorphic in the orbifold sense. If we assume that $M$ is simply-connected, then $\pi_1^{orb}(S)$ is trivial. After resolving the singularities of $S$,  we obtain a smooth simply-connected $\overline{S}$ with trivial canonical bundle. So $\overline{S}$ is a K3 surface, according to Kodaira classification of compact complex surfaces. Then it follows that when $M$ has torsionless cohomology, it is diffeomorphic to $\sharp k(S^2\times S^4)\sharp (k+1)(S^3\times S^3)$ for $1\leq k \leq 22$,  where $k$ depends on the number of blow-ups in the resolution of $S$. If $S$ itself is smooth, then  $k=22$.
\end{oss}

\noindent {\bf Acknowledgments.} This project has started and later significantly developed during two visits of one of us (GG) to Torino. The second named author wishes to thank the Mathematics Department of Universit\`{a} di Torino for the warm hospitality.  We are grateful  to Tony Pantev  for the discussions and help on Lemma 4.2 and its proof.  We would also like to   thank   Ugo Bruzzo and Cinzia Casagrande for useful comments.
 The authors wish also  to thank the anonymous referee for  careful reading of the manuscript  and for pointing out a gap in the previous version of the paper as well as the helpful suggestions.


\begin{thebibliography}{12}

\bibitem{AGG} K. Abe, D. Grantcharov, G. Grantcharov, {\em
On some complex manifolds with torus symmetry},  Recent advances in Riemannian and Lorentzian geometries (Baltimore, MD, 2003), 1–8,
Contemp. Math., {\bf 337}, Amer. Math. Soc., Providence, RI, 2003.


\bibitem{Barden} D. Barden, Simply connected five manifolds, {\em Ann. of Math.} (2) {\bf 82} (1965), 365--382.

\bibitem{BS}  I. Biswas, G. Schumacher,  The Weil-Peterson currect for moduli of vector bundles and applications to orbifolds,  {\em Ann. Fac. Sci. Toulouse Math.}  (6) {\bf 25} (2016), no. 4, 895--917.

\bibitem{BG} C. P.  Boyer,  K. Galicki,  {\em Sasakian geometry},  Oxford Mathematical Monographs, Oxford University Press, Oxford, 2008.

\bibitem{CE} E. Calabi and B. Eckmann, A class of compact, complex manifolds which are not algebraic,  {\em Ann. of  Math.}  {\bf 58} (1953), 494--500.

\bibitem{CP} F. Campana, T. Peternell,
Algebraicity of the ample cone of projective varieties,
 {\em J. Reine Angew. Math.} {\bf 407} (1990), 160--166.

\bibitem{CHSW} P. Candelas, G.T. Horowitz, A. Strominger, E.  Witten,  Vacuum configurations for superstrings, {\em Nucl.
Phys. B} {\bf 258} (1985), 46--74.

\bibitem{C1}  J. Chu, L. Huang, X. Zhu,
 The Fu-Yau equation on compact astheno-K\"ahler manifolds, {\em  Adv. Math.} {\bf 346} (2019), 908–945.

\bibitem{C2} J. Chu, L. Huang, X. Zhu, The Fu-Yau equation in higher dimensions, {\em Peking Math. J.} {\bf 2} (2019), no. 1, 71–97.

\bibitem{cuadros} J. Cuadros,  Null Sasaki eta-Einstein Structures in Five Manifolds,   {\em Geom. Dedicata} {\bf 169} (2014), 343--359.


\bibitem{D-S} X. de la Ossa, E.E. Svanes,  Connections, field redefinitions and heterotic supergravity, {\em J. High Energ. Phys.} 2014, no. 10, 123, front matter+54 pp.


\bibitem{Donaldson} S. K. Donaldson,  Anti self-dual Yang-Mills connections over complex algebraic
surfaces and stable vector bundles,  {\em Proc. London Math. Soc.}  {\bf 50}  (1985), 1--26.

\bibitem{EKA}
A. El Kacimi-Alaoui, Op\'erateurs transversalement elliptiques sur
un feuilletage riemannien et applications, \emph{Compositio Math.}
{\bf 73} (1990), 57--106.

\bibitem{Epstein} D. Epstein, Foliations with all leaves compact, {\em Ann. Inst. Fourier}  {\bf 26} (1976), no. 1,   265--282.

\bibitem{Faulk} M. Faulk, On Yau's theorem for effective orbifolds, {\em Expo. Math,}  {\bf 37}, n. 4, (2019), 382--409.

\bibitem{Fei18}  T. Fei, Generalized Calabi-Gray Geometry and Heterotic Superstrings, {\tt arXiv:1807.08737}.


\bibitem{FHP}  T. Fei, Z.-J. Huang,  S. Picard, A construction of infinitely many solutions to the Strominger system,  {\tt arXiv:1703.10067}, To appear in {\em J. Diff. Geom.}

\bibitem{FHP17} T. Fei, Z.-J. Huang,  S. Picard,  The Anomaly flow over Riemann surfaces, {\tt arXiv:1711.08186} to appear in {\em Int. Math. Res. Notices}.

 \bibitem{FY14} T. Fei , S.-T. Yau, Invariant Solutions to the Strominger System on Complex Lie
Groups and Their Quotients, {\em Comm. Math. Phys.} {\bf 338} (2015) No. 3, 1183-1195.

\bibitem{FIUV} M. Fern\'andez, S. Ivanov, L. Ugarte, R. Villacampa, Non-K\"ahler heterotic-string compactifications with non-zero fluxes and constant dilaton, { \em Commun. Math. Phys.}  {\bf 288}
(2009),  677--697.

\bibitem{FP} J. Fine, D. Panov, Hyperbolic geometry and non-K\"ahler manifolds with trivial canonical bundle, {\em Geom. Topol.}  {\bf 14} (2010) 1723–-1763.


\bibitem{FuYau2007} J.-X. Fu, S.-T. Yau, A Monge-Amp\`ere type equation motivated by string theory,
{\em Comm. Anal. Geom.} {\bf 15} (2007) 29--76.

\bibitem{FuYau}
J.-X. Fu, S.-T. Yau, The theory of superstring with flux on non-K\"ahler manifolds and the complex Monge-Amp\`ere equation, {\em J. Differential Geom.} {\bf 78} (2008), no. 3, 369--428.



\bibitem{FTY09} J.-X. Fu, L.-S. Tseng and S.-T. Yau, Local heterotic torsional models, {\em Comm. Math.
Phys.} {\bf 289}  (2009) 1151--1169.

 \bibitem{Fernandez16}  M. Garcia-Fernandez, Lectures on the Strominger system, Travaux Math\'ematiques, Special Issue:
School GEOQUANT at the ICMAT, Vol. XXIV (2016) 7--61.

\bibitem{Fernandez18} M. Garcia-Fernandez, T-dual solutions of the Hull-Strominger system on non-K\"ahler threefolds, arXiv: 1810.04740.

\bibitem{GP}
E. Goldstein, S. Prokushkin, Geometric model for complex non-K\"ahler manifolds with ${\rm SU}(3)$ structure,
{\em Comm. Math. Phys.} {\bf 251} (2004), no. 1, 65--78.

\bibitem{GL}
R. Z. Goldstein, L. Lininger, A classification of 6-manifolds with free $S^1$
actions, in \lq \lq Proceedings of the Second Conference on Compact Transformation
Groups"  (Univ. Massachusetts, Amherst, Mass., 1971), Part I, {\em Lecture Notes in
Math.} {\bf 298}, Springer, Berlin, (1972) 316--323.



\bibitem{GGP} D. Grantcharov, G. Grantcharov,  Y.S. Poon, Calabi-Yau connections with torsion on toric bundles, {\em J. Differential Geom} {\bf 78} (2008), no. 1, 13--32.




\bibitem{HV} G. Habib, L. Vezzoni, Some remarks on Calabi-Yau and hyper-K\"ahler foliations,  {\em Differential Geom. Appl.} {\bf 41} (2015), 12--32.

 \bibitem{HS}  A.  Haefliger, E.  Salem, Actions of Tori on Orbifolds,   {\em Ann. Global Anal. Geom.}
 {\bf 9} (1991), no. 1, 37--59.

 \bibitem{Hull86}   C. Hull, Superstring compactifications with torsion and space-time supersymmetry, In
Turin 1985 Proceedings \lq \lq Superunification and Extra Dimensions" (1986),  347--375.


\bibitem{HuyLehn10}   D. Huybrechts, M. Lehn  The Geometry of Moduli Spaces of Sheaves, 2nd ed., {\em Cambridge Univ. Press} (2010).


 \bibitem{IF}  A. R. Iano-Fletcher, Working with Weighted Complete Intersections, Explicit Birational Geometry of $3-$folds, London Math. Soc. Lecture Notes Ser., vol. 281, Cambridge Univ. Press, Cambridge (2000), 101--173.

\bibitem{Kollar} J. Koll\'ar, Shafarevich maps and plurigenera of algebraic varieties, {\em Invent. Math.}
{\bf 113} (1993), no. 1, 177--215.

\bibitem{Kollar2004} J. Koll\'ar, Seifert $G_m$-bundles, {\tt  arXiv:\,math/0404386}.

\bibitem{Kollar2005} J. Koll\'ar,   Einstein metrics on five-dimensional Seifert bundles, {\em J. Geom. Anal.}  {\bf 15} (2005), 445--476.

\bibitem{Kollar2007}  J. Koll\'ar,   Einstein metrics on connected sums of  $S^2 \times S^3$,  {\em J. Differential Geom.}  {\bf 75} (2007), 259--272.

\bibitem{LTX} C. Laurent-Gengoux,  J. L. Tu, P. Xu, Chern-Weil map for principal bundles over groupoids, {\em Math. Z.} {\bf 255} (2007), 451--491.

\bibitem{Lee}H. Lee, Strominger’s System on non-K\"ahler Hermitian Manifolds, PhD Thesis, Oxford University, (2011).

\bibitem{LiYau}
J. Li, S.-T. Yau, Hermitian Yang-Mills connections on non-K\"ahler manifolds, Mathematical aspects of string theory (S.-T. Yau ed.), 560-573, World Scient. Publ. 1987.

\bibitem{LY2005} J. Li, S.-T. Yau, The existence of supersymmetric string theory with torsion,  {\em J. Differential Geom.}  {\bf 70}  (2005) 143--181.


\bibitem{molino} P. Molino, {\em Riemannian foliations}, Progress in Mathematics, {\bf 73}. Birkh\"auser, Boston 1988.

\bibitem{OUV17} A. Otal, L. Ugarte, R. Villacampa, Invariant solutions to the Strominger system and
the heterotic equations of motion, {\em Nuclear Phys. B}   {\bf 920} (2017), 442--474.

\bibitem{PPZ16} D.-H. Phong, S. Picard, X. Zhang,  The Anomaly flow and the Fu-Yau equation,  {\em Ann. PDE}  {\bf 4}  (2018), no. 2, Art. 13, 60 pp.


\bibitem{picard}
D.-H. Phong, S. Picard, X.-W. Zhang,  On estimates for the Fu-Yau generalization of a Strominger system, {\em J. Reine Angew. Math.} {\bf 751} (2019), 243–274.

\bibitem{PPZ16}
D.-H. Phong, S. Picard, X.-W. Zhang, Anomaly flows,    {\em Comm. Anal. Geom.} {\bf 26}  (2018), no. 4, 955--1008.


\bibitem{PPZ17unim} D.-H. Phong, S. Picard, X.-W. Zhang, The Anomaly flow on unimodular Lie groups, {\em Advances in complex geometry,} Contemp. Math., {\bf 735} 217–237, Amer. Math. Soc., Providence, RI, 2019.

\bibitem{PPZ17} D.-H. Phong, S. Picard,   X.-W.  Zhang, The Fu-Yau equation with negative slope parameter, {\em Invent. Math.}  {\bf 209} (2017),
541--576.

\bibitem{PPZ18} D.-H. Phong, S. Picard,   X.-W.  Zhang, Fu-Yau Hessian equations, {\tt arXiv:1801.09842}, to appear in J. Diff. Geom.

\bibitem{PPZ18} D.-H. Phong, S. Picard,   X.-W.   Zhang,  New curvature flows in complex geometry, {\em Surveys in Differential Geometry} {\bf 22} (2017), No. 1, 331-364.

 \bibitem{Reid}  M. Reid,  Young Person's Guide to Canonical Singularities, in Algebraic geometry, Bowdoin, 1985 (Brunswick, Maine, 1985), 345--414,
Proc. Sympos. Pure Math., 46, Part 1, Amer. Math. Soc., Providence, RI, 1987.


\bibitem{Simpson} C. Simpson, Constructing variations of Hodge structure using Yang-Mills theory and applications to uniformization,  {\em J. Amer. Math. Soc.} {\bf 1} (1988), 867--918.

\bibitem{Strominger} A. E. Strominger,  Superstrings with torsion,  {\em Nuclear Phys. B}  {\bf 274}(2) (1986),  253--284.


\bibitem{Sund} D. Sundararaman, {\it Compact Hausdorff Transversally Holomorphic Foliations} , {\em Complex Analysis}  Proceedings of the Summer School. Held at the International Centre for Theoretical Physics, Trieste, July 5 - 30, 1980, eds.  J. Eells, Lect. Notes in Math. {\bf 950} 360--376. 1983.



\bibitem{UY} K. Uhlenbeck, S.-T. Yau, On the existence of Hermitian-Yang-Mills connections in stable vector bundles. Frontiers of the mathematical sciences, {\em Comm. Pure Appl. Math.}  {\bf 39} (1986), 257--293.

\bibitem{Verbitsky} M. Verbitsky, Holomorphic symplectic geometry and orbifold singularities, {\em Asian J.
Math.}  {\bf 4}  (2000), no. 3, 553--563.


\bibitem{Yau} S.-T. Yau, On the Ricci curvature of a compact K\"ahler manifold and the complex Monge-Amp\`ere equation. I., {\em Comm. Pure Appl. Math.}  {\bf 31} (1978), no. 3, 339--411.

\end{thebibliography}
\end{document}